%
\documentclass{amsart}
\usepackage{amssymb}
\usepackage{latexsym}
\usepackage{amsmath}
\usepackage{euscript}
\usepackage{mathrsfs}

      \def\dC{{\mathbb C}}

      \def\dR{{\mathbb R}}

\def\cA{{\EuScript A}}   \def\cB{{\EuScript B}}   \def\cC{{\EuScript C}}
      
   \def\cH{{\EuScript H}}

   \def\cT{{\EuScript T}}

\def\bm\chi{\mbox{\boldmath$\chi$}}

\def\RE{{\rm \,Re\,}}
\def\IM{{\rm \,Im\,}}

\def\min{{\rm min\,}}

\def\ker{{\rm ker\,}}
\def\ran{{\rm ran\,}}

\def\dom{{\rm dom\,}}

\let\xker=\ker \def\ker{{\xker\,}}

\newcommand{\sgn}{\mathop{\mathrm{sgn}}\nolimits}

\def\cmr{{\dC \setminus \dR}}

\def\senki{{\lbrack\negthinspace [\bot ]\negthinspace\rbrack}}
\def\senki+{{\lbrack\negthinspace [+] \negthinspace\rbrack}}

\newtheorem{theorem}{Theorem}[section]
\newtheorem{proposition}[theorem]{Proposition}

\newtheorem{lemma}[theorem]{Lemma}
\theoremstyle{definition}

\numberwithin{equation}{section}

%
\usepackage[normalem]{ulem}
\usepackage{color}
\definecolor{DarkBlue}{rgb}{0,0.1,0.7}
\newcommand{\txtD}{\textcolor{DarkBlue}}
\newcommand\soutD{\bgroup\markoverwith
{\textcolor{DarkBlue}{\rule[.01ex]{2pt}{1pt}}}\ULon}
\newcommand{\Hm}[1]{\leavevmode{\marginpar{\tiny%
$\hbox to 0mm{\hspace*{-0.5mm}$\leftarrow$\hss}%
\vcenter{\vrule depth 0.1mm height 0.1mm width \the\marginparwidth}%
\hbox to
0mm{\hss$\rightarrow$\hspace*{-0.5mm}}$\\\relax\raggedright #1}}}

\begin{document}


\title{An indefinite Laplacian on a rectangle}
 \author[J.~Behrndt]{Jussi Behrndt}
 \author[D.~Krej\v{c}i\v{r}\'ik]{David Krej\v{c}i\v{r}\'ik}


\address{Institut f\"ur Numerische Mathematik\\
Technische Universit\"at Graz \\
Steyrergasse 30\\
8010 Graz \\
Austria}
\email{behrndt@tugraz.at}

\address{Department of Theoretical Physics \\
Nuclear Physics Institute ASCR \\
25068 \v{R}e\v{z} \\ Czech Republic}
\email{krejcirik@ujf.cas.cz}

\begin{abstract}
In this note we investigate the nonelliptic differential expression $\cA=-\text{div}\,\sgn\nabla$ on a rectangular domain $\Omega$ in the plane. 
The seemingly simple problem to associate a selfadjoint operator with the differential expression $\cA$ in $L^2(\Omega)$ is solved here. 
Such indefinite Laplacians arise in mathematical models of metamaterials
characterized by negative electric permittivity 
and/or negative magnetic permeability.
\end{abstract}

\maketitle

\section{Introduction}
Consider the domains $\Omega_+=(0,1)\times (0,1)$ and $\Omega_-=(-1,0)\times(0,1)$ and let $\Omega=(-1,1)\times (0,1)$ and $\cC=\{0\}\times (0,1)$.
We study the nonelliptic differential expression $\cA$ defined by
\begin{equation}\label{A}
 \cA f=-\text{div}\, (\sgn\nabla f\,),\qquad \text{where}\quad \sgn(x,y)=\begin{cases} 1, & (x,y)\in\Omega_+, \\ -1, & (x,y)\in\Omega_-, \end{cases}
\end{equation}
on the rectangle $\Omega$. Our aim is to associate a selfadjoint operator in $L^2(\Omega)$ with Dirichlet boundary conditions on $\partial\Omega$ to $\cA$. 
Informally speaking, in this seemingly simple toy problem this will be the partial differential operator
\begin{equation}\label{opaopa}
 \begin{split}
  A f&=\cA f=\begin{pmatrix} -\Delta f_+\\ \Delta f_-\end{pmatrix},\\
 \dom A&=\left\{f=\begin{pmatrix}f_+\\ f_-\end{pmatrix}: \begin{matrix} f_\pm,\,\Delta f_\pm\in L^2(\Omega_\pm),\, f\vert_{\partial\Omega}=0,\\ 
        f_+\vert_\cC=f_-\vert_\cC,\, \partial_{{\bf n}_+}f_+\vert_\cC=
 \partial_{{\bf n}_-}f_-\vert_\cC                                                  \end{matrix}\right\},
 \end{split}
\end{equation}
where $f_\pm$ denote the restrictions of a function $f\in L^2(\Omega)$ onto $\Omega_\pm$, and
the normal derivatives $\partial_{{\bf n}_+}$ and $\partial_{{\bf n}_-}$ point outward of $\Omega_\pm$ 
(and hence in opposite directions at $\cC$).
The main peculiarity here is the interface condition 
\begin{equation*}
 \partial_{{\bf n}_+}f_+\vert_\cC=\partial_{{\bf n}_-}f_-\vert_\cC,
 \qquad f=(f_+,f_-)^\top\in\dom A,      
\end{equation*}
for the normal derivatives, which is due to the sign change 
and discontinuity
of the coefficient $\sgn$ at  $\cC$.
Our main result states that (when the Dirichlet and Neumann traces are properly interpreted) the operator $A$ in \eqref{opaopa} is selfadjoint 
in $L^2(\Omega)$. 

The non-standard interface condition is responsible
for unexpected spectral properties of~$A$.
Although the domain~$\Omega$ is bounded,
it turns out that the essential spectrum of~$A$ is not empty, namely
$0$~is an isolated eigenvalue of infinite multiplicity. The remaining part of the spectrum of $A$ consists of discrete eigenvalues
which accumulate to $+\infty$ and $-\infty$. 
We note that the differential equation
$\cA f=\lambda f$ can of course be solved by separation of variables; 
the main feature
of this note is the description of the domain of the corresponding selfadjoint operator~$A$ with explicit boundary and interface conditions.

We point out that $\dom A$ contains functions which do not belong to any
local Sobolev space $H^s$, $s>0$, in a neighbourhood of the interface $\cC$. This leads to the following difficulties:
Green's identity is not valid for functions $f,g\in\dom A$ 
and the definition of the (local) Dirichlet and Neumann traces is rather subtle,
and requires a particularly careful analysis. Here we employ recent results on the extension of trace maps onto maximal domains of Laplacians
on (quasi-)convex and Lipschitz domains from \cite{BM14,GM11} and we rely on the description of the traces of $H^2(\Omega_\pm)$-functions in \cite{GK00}.
It finally turns out that the operator $A$ can be viewed as a kind of Krein-von Neumann extension of a non-semibounded
symmetric operator with infinite defect and domain contained in  
$H^2(\Omega_+)\times H^2(\Omega_-)$; 
thus only the functions in the infinite dimensional
eigenspace $\ker A$ do not possess 
$H^s$-regularity near the interface $\cC$.

We wish to emphasize that our result complements the results in \cite{BDR99} where 
the related problem
\begin{equation}\label{A_eps}
 \cA_\varepsilon f=-\text{div}\, (\varepsilon\,\nabla f),\qquad \quad \varepsilon(x,y)=\begin{cases} \varepsilon_+, & (x,y)\in\Omega_+, \\ -\varepsilon_-, & (x,y)\in\Omega_-, 
 \end{cases}
\end{equation}
with $\varepsilon_\pm>0$ was treated under the assumption $\varepsilon_+\not=\varepsilon_-$ with the help of boundary integral methods on more general
domains $\Omega\subset\dR^2$; 
for related problems see also
\cite{Bonnet_2012,Costabel-Stephan_1985,DT97,Grieser,Hussein-thesis,Hussein}.
It is shown in \cite{BDR99} that, if $\varepsilon_+\not=\varepsilon_-$, the operator
\begin{equation}\label{A_eps.op}
  A_\varepsilon f = \cA_\varepsilon f, 
  \qquad
  \dom A_\varepsilon 
  = \bigl\{f \in H_0^1(\Omega) : \cA_\varepsilon f \in L^2(\Omega)\bigr\},
\end{equation}
is selfadjoint, has a compact resolvent, 
and with eigenvalues accumulating to $+\infty$ and $-\infty$.
The borderline case $\varepsilon_+=\varepsilon_-$ that we investigate in this note was excluded in \cite{BDR99} and the other works
(except for the one-dimensional situation~\cite{Hussein}, 
which is intrinsically different). We also wish to mention that abstract representation theorems for indefinite quadratic forms and related
form methods in \cite{GKMV10} (see also \cite{FHS00,Hussein-thesis} 
and \cite{PTW14,TW14}) 
are not directly applicable in the present problem or do not lead to a selfadjoint operator in $L^2(\Omega)$.
The eigenvalue problem $\cA_\varepsilon f = \lambda f$ 
in our rectangular geometry
was previously considered in~\cite{Hussein-thesis}
with the help of separation of variables (cf.~Section~\ref{Sec.spec}),
from which it follows that~$0$ is an eigenvalue of
infinite multiplicity provided that $\varepsilon_+=\varepsilon_-$.

The indefinite differential expressions~\eqref{A} and~\eqref{A_eps} 
arise in mathematical models of metamaterials 
which are characterized by negative electric permittivity 
and/or negative magnetic permeability
(see \cite{Pendry_2004,Smith} for a physical survey 
and \cite{BBF,BS,FB} for a rigorous justification
of the models via a homogenization of Maxwell's equations
in geometrically non-trivial periodic structures). 
More specifically, our rectangular model can be thought 
as simulating an interface between 
a dielectric material in $\Omega_+$ and a metamaterial in $\Omega_-$.
It has been known since the seminal work~\cite{Costabel-Stephan_1985}
that the problem of the type $\cA_\varepsilon f=\rho$ 
in~$\Omega$ with a smooth interface
is well posed in $H_0^1(\Omega)$ if and only if
the contrast $\kappa:=\varepsilon_+/\varepsilon_-$
is different from~$1$.
Proving that~\eqref{opaopa} is selfadjoint,
in this note we provide a correct functional setting
for the problem on a rectangle in the critical situation $\kappa=1$.
Moreover, in Section~\ref{Sec.spec} of this note we show that the
eigenvalues and eigenfunctions of~$A_\varepsilon$ converge
to eigenvalues and eigenfunctions of the operator~$A$
as $\kappa \to 1$.

An alternative approach to theoretical studies of metamaterials
is to add a small imaginary number to the negative value 
of~$\mathrm{sgn}$,
arguing that ``real systems are always slightly lossy'', see, e.g. \cite{Pendry_2004}.
This leads to a complexified differential expression
\begin{equation}\label{A.complex}
 \cB_\eta f=-\text{div}\, (\varepsilon_\eta\nabla f),
  \qquad \quad 
  \varepsilon_\eta(x,y)=\begin{cases} 1, & 
  (x,y)\in\Omega_+, \\ -1 + i\eta, & (x,y)\in\Omega_-, 
 \end{cases}
\end{equation}
with $\eta > 0$, which immediately provides a well-defined operator 
\begin{equation}\label{A.complex.op}
  B_\eta f = \cB_\eta f, 
  \qquad
  \dom B_\eta = \bigl\{f \in H_0^1(\Omega) : \cB_\eta f \in L^2(\Omega)\bigr\}.
\end{equation}
Indeed, the rotated operator $e^{-i(\pi/2-\eta)} B_\eta$ 
is an $m$-sectorial operator with vertex~$0$ and semi-angle~$\pi/2-\eta$,
which is defined via the associated sectorial form defined on $H_0^1(\Omega)$; cf.~\cite[Sec.~VI]{K80}.
It follows that~$B_\eta$ is an operator 
with compact resolvent for every $\eta>0$,
albeit non-selfadjoint now.
Let us note that considering the complexified problem $B_\eta f = \rho$
in the limit as $\eta \to 0$ is a conventional way how to describe 
the cloaking effects in metamaterials (of different geometric structure)
through the ``anomalous localized resonance'', see \cite{Bouchitte, Milton}.
We shall show that the eigenvalues and eigenfunctions of~$B_\eta$ converge
to eigenvalues and eigenfunctions of our operator~$A$ as $\eta \to 0$.
Recall that $A f = \rho$ is generally ill-posed since~$0$ is an eigenvalue 
of infinite multiplicity .

This note is organized as follows.
In Section~\ref{Sec.Green} we establish
a modified version of Green's identity
and other preliminary results that we shall frequently use later. 
In Section~\ref{Sec.aux} we introduce an auxiliary closed
symmetric operator~$R$ and study its properties. 
By considering a generalized Krein--von Neumann extension of $R$,
the selfadjointness of~$A$ is proved in Section~\ref{Sec.ss}, 
where we also discuss qualitative spectral properties of~$A$. 
More quantitative results about the spectrum of~$A$
and the aforementioned convergence results 
are established in Section~\ref{Sec.spec}.

\subsection*{Acknowledgement.} 
We wish to thank our colleagues 
Guy Bouchitt\'e, E.~Brian Davies, Amru Hussein, Vadim Kostrykin,
Rainer Picard, Karl-Michael Schmidt, and Sascha Trostorff for fruitful discussions.
This work is supported by the Austrian Science Fund (FWF), project P 25162-N26,
Czech project RVO61389005 and the GACR grant No.\ 14-06818S.

\section{A generalized Green's identity on the maximal domain}\label{Sec.Green}

The Dirichlet realizations $A_{D\pm}$ associated to $\mp\Delta $ in $L^2(\Omega_\pm)$ will play an important role in the sequel. Recall that
\begin{equation}
 A_{D\pm}=\mp \Delta,\qquad \dom A_{D\pm}=H^1_0(\Omega_\pm)\cap H^2(\Omega_\pm),
\end{equation}
are selfadjoint operators in $L^2(\Omega_\pm)$ with compact resolvents, that $A_{D+}$ is uniformly positive, and that $A_{D-}$ is uniformly negative.
Here the $H^2$-regularity is consequence of $\Omega_\pm$ being convex; 
cf.~\cite{G85,G92}. If $\gamma_D$ denotes the Dirichlet trace operator
defined on $H^2(\Omega_\pm)$ then one has 
\begin{equation*}
 \dom A_{D\pm}=\bigl\{f_\pm\in H^2(\Omega_\pm):\gamma_D f_\pm=0\bigr\}.
\end{equation*}

The selfadjoint Neumann operators are given by 
\begin{equation*}
A_{N\pm}=\mp \Delta,\qquad \dom A_{N\pm}=\bigl\{f_\pm\in H^2(\Omega_\pm):\gamma_{N_\pm} f_\pm=0\bigr\},
\end{equation*}
where $\gamma_{N_\pm}$ are the Neumann trace operator defined on $H^2(\Omega_\pm)$ with normal pointing outwards $\Omega_\pm$. 

We shall also make use of the spaces
\begin{equation*}
\begin{split}
 \mathscr G_N(\partial\Omega_\pm)&:=\ran\bigl(\gamma_{N_\pm}(\dom A_{D\pm})\bigr)=\bigl\{\gamma_{N_\pm} f_\pm: f_\pm\in H^2(\Omega_\pm),\,
 \gamma_D f_\pm=0\bigr\},\\
 \mathscr G_D(\partial\Omega_\pm)&:=\ran\bigl(\gamma_D(\dom A_{N\pm})\bigr)
 =\bigl\{\gamma_D f_\pm: f_\pm\in H^2(\Omega_\pm),\,\gamma_{N_\pm} f_\pm
 =0\bigr\},
 \end{split}
 \end{equation*}
which were characterized and denoted by $N^{1/2}(\partial\Omega_\pm)$ and $N^{3/2}(\partial\Omega_\pm)$, respectively, in \cite{GM11}, and also appear 
in \cite{BM14} in a more general setting. We equip $\mathscr G_N(\partial\Omega_\pm)$ and $\mathscr G_D(\partial\Omega_\pm)$ with the natural norms 
\cite[(6.6) and (6.42)]{GM11}.
If ${\bf n}_\pm$ and ${\bf t}_\pm$ denote the unit normal pointing outwards and a corresponding tangential vector, respectively,  
and $\partial_{{\bf t}_\pm}$ is the tangential derivative on $\partial\Omega_\pm$, then according to
\cite[Theorem 3]{GK00} one has
\begin{equation*}
 \bigl(\gamma_{N_\pm}f_\pm\bigr){\bf t}_\pm \in \bigl(H^{1/2}(\partial\Omega_\pm)\bigr)^2
\end{equation*}
 for all $\gamma_{N_\pm}f_\pm\in\mathscr G_N(\partial\Omega_\pm)$ and
 \begin{equation*}
 \bigl(\partial_{{\bf t}_\pm}\gamma_D f_\pm \bigr){\bf n}_\pm \in \bigl(H^{1/2}(\partial\Omega_\pm)\bigr)^2
 \end{equation*}
 for all $\gamma_D f_\pm\in\mathscr G_D(\partial\Omega_\pm)$, where
 \begin{equation*}
  H^{1/2}(\partial\Omega_\pm)=\left\{\varphi\in L^2(\partial\Omega_\pm):\int_{\partial\Omega_\pm}\int_{\partial\Omega_\pm}
  \frac{\vert \varphi(\alpha)-\varphi(\beta)\vert^2}{\vert\alpha-\beta\vert^2}\,d\alpha\,d\beta<\infty\right\}.
 \end{equation*}
The following statement on the decomposition of functions in $\mathscr G_N(\partial\Omega_\pm)$ and $\mathscr G_D(\partial\Omega_\pm)$
in two parts with supports on $\cC$ and $\cC_\pm:=\partial\Omega_\pm\backslash\cC$, respectively, is a direct consequence of the abovementioned
fact.

\begin{lemma}\label{glem} 
Every function $\varphi\in \mathscr G_N(\partial\Omega_\pm)$
 (resp. $\varphi\in\mathscr G_D(\partial\Omega_\pm)$)
 admits a decomposition in the form
 \begin{equation}
  \varphi=(\varphi\vert_\cC)^\sim + (\varphi\vert_{\cC_\pm})^\sim 
 \end{equation}
where $(\varphi\vert_\cC)^\sim\in\mathscr G_N(\partial\Omega_\pm)$ (resp. $(\varphi\vert_\cC)^\sim\in\mathscr G_D(\partial\Omega_\pm)$) is the extension of $\varphi\vert_\cC$ to $\partial\Omega_\pm$ by $0$, and 
$(\varphi\vert_{\cC_\pm})^\sim\in\mathscr G_N(\partial\Omega_\pm)$ (resp. $(\varphi\vert_{\cC_\pm})^\sim\in\mathscr G_D(\partial\Omega_\pm)$) is the extension of $\varphi\vert_{\cC_\pm}$ to $\partial\Omega_\pm$ by $0$.
\end{lemma}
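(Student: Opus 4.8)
The plan is to reduce the statement to the regularity result quoted from \cite[Theorem 3]{GK00} together with a localization argument using a smooth cut-off function on $\partial\Omega_\pm$ that is supported near $\cC$. We treat the case $\varphi\in\mathscr G_N(\partial\Omega_\pm)$; the case $\varphi\in\mathscr G_D(\partial\Omega_\pm)$ is entirely analogous, replacing $\gamma_{N_\pm}$ by $\gamma_D$, the Dirichlet realization by the Neumann realization, and the vector field $(\gamma_{N_\pm}f_\pm){\bf t}_\pm$ by $(\partial_{{\bf t}_\pm}\gamma_D f_\pm){\bf n}_\pm$. So fix $\varphi=\gamma_{N_\pm}f_\pm$ with $f_\pm\in H^2(\Omega_\pm)$ and $\gamma_D f_\pm=0$. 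By \cite[Theorem 3]{GK00}, the vector field $\varphi\,{\bf t}_\pm$ lies in $(H^{1/2}(\partial\Omega_\pm))^2$. Since $\partial\Omega_\pm$ is a (closed, polygonal) Lipschitz curve and ${\bf t}_\pm$ is a piecewise-constant unit tangent, the scalar function $\varphi$ restricted to each side of the rectangle inherits $H^{1/2}$-regularity; the crucial point is the behaviour at the corners where $\cC$ meets $\cC_\pm$, which is exactly controlled by the $(H^{1/2})^2$-bound on $\varphi\,{\bf t}_\pm$ (the directions of ${\bf t}_\pm$ on the two adjacent sides are linearly independent, so no cancellation can hide a singularity).

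Next I would introduce a cut-off: let $\chi\in C^\infty(\partial\Omega_\pm)$ with $\chi\equiv 1$ on a neighbourhood of $\cC$ in $\partial\Omega_\pm$ and $\chi\equiv 0$ outside a slightly larger neighbourhood, chosen so that $\supp\chi$ avoids the two outer corners of $\Omega_\pm$. One then checks that pointwise multiplication by $\chi$ maps $\mathscr G_N(\partial\Omega_\pm)$ into itself continuously; this can be deduced from the characterization of $\mathscr G_N(\partial\Omega_\pm)=N^{1/2}(\partial\Omega_\pm)$ in \cite{GM11} (the space is a multiplier-stable subspace of $H^{1/2}(\partial\Omega_\pm)$ with finitely many excised corner behaviours, and $\chi$ is constant near every corner touched by $\supp\chi$, hence does not destroy membership). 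Setting $(\varphi\vert_\cC)^\sim:=\chi\varphi$ and $(\varphi\vert_{\cC_\pm})^\sim:=(1-\chi)\varphi$ gives the decomposition; both summands lie in $\mathscr G_N(\partial\Omega_\pm)$ and, since $\chi$ is $1$ near $\cC$ and $0$ near $\cC_\pm$ away from $\cC$, they are honest extensions by $0$ of $\varphi\vert_\cC$ and $\varphi\vert_{\cC_\pm}$ respectively, up to the overlap region — the final step is to note that on the overlap the two pieces still add up to $\varphi$, so after relabelling (absorbing the transition region into either piece using that $\varphi$ itself is $H^{1/2}$ there) one obtains genuine extensions by zero. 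In fact, since $\varphi\,{\bf t}_\pm\in(H^{1/2})^2$ already forces $\varphi$ to vanish in the $H^{1/2}$-trace sense at the corners separating $\cC$ from $\cC_\pm$, the extensions by $0$ across those corners automatically stay in $H^{1/2}$ of each side, and membership in $\mathscr G_N(\partial\Omega_\pm)$ follows from the same corner-compatibility built into the definition of $N^{1/2}(\partial\Omega_\pm)$.

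The main obstacle, and the point deserving the most care, is the assertion that extension by zero from $\cC$ (or from $\cC_\pm$) to all of $\partial\Omega_\pm$ stays inside $\mathscr G_N(\partial\Omega_\pm)$ rather than merely inside $L^2(\partial\Omega_\pm)$ or $H^{1/2}$ of each smooth piece. Zero-extension is notorious for failing to preserve $H^{1/2}$ across an interior boundary point in one dimension, and here $\cC$ and $\cC_\pm$ meet at the two corners of $\Omega_\pm$ lying on the segment $\{0\}\times(0,1)$; what rescues the argument is precisely the statement of \cite[Theorem 3]{GK00} that the \emph{vectorial} quantity $(\gamma_{N_\pm}f_\pm){\bf t}_\pm$ is $(H^{1/2})^2$, which encodes a compatibility (indeed a vanishing, in the appropriate weak sense) of $\varphi$ at those corners strong enough to make zero-extension harmless. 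Thus the lemma is, as the text says, a ``direct consequence'' of the quoted trace result, once one unwinds what that result says about corner behaviour; the role of this proof is to make that unwinding explicit.
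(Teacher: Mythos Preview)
The paper gives no explicit proof; it merely asserts that the lemma is a direct consequence of the trace description in \cite[Theorem~3]{GK00}. Your proposal is an attempt to unpack that claim, and you correctly identify the essential mechanism: the membership $\varphi\,{\bf t}_\pm\in(H^{1/2}(\partial\Omega_\pm))^2$, combined with the fact that ${\bf t}_\pm$ rotates by a right angle at each corner, encodes exactly the compatibility that makes zero-extension across the corners of $\cC$ harmless.

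However, the middle paragraph is a false start. Defining $(\varphi\vert_\cC)^\sim:=\chi\varphi$ does not produce the extension by zero: on the portions of the horizontal sides where $0<\chi<1$ one has $\chi\varphi\not=0$ in general, and no ``relabelling'' or ``absorbing the transition region'' turns $\chi\varphi$ into a function vanishing identically on $\cC_\pm$. The cut-off is applied to the wrong object.

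The argument becomes clean if you apply the cut-off to a scalar \emph{component} of $\varphi\,{\bf t}_\pm$ rather than to $\varphi$ itself. Since ${\bf t}_\pm$ is vertical on the two vertical sides and horizontal on the two horizontal sides, the vertical component of $\varphi\,{\bf t}_\pm$ equals $\pm\varphi$ on $\cC$ and on the opposite vertical side, and is \emph{identically zero} on both horizontal sides; this component lies in $H^{1/2}(\partial\Omega_\pm)$ by \cite{GK00}. Now take a smooth cut-off equal to $1$ near $\cC$ and $0$ near the opposite vertical side (these do not meet); the product is still in $H^{1/2}$ and coincides, up to sign, with $(\varphi\vert_\cC)^\sim$ on all of $\partial\Omega_\pm$. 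Hence $(\varphi\vert_\cC)^\sim{\bf t}_\pm\in(H^{1/2})^2$ (the other component being identically zero), and the converse direction of the characterization of $N^{1/2}(\partial\Omega_\pm)$ from \cite{GK00,GM11} yields $(\varphi\vert_\cC)^\sim\in\mathscr G_N(\partial\Omega_\pm)$. Your final paragraph gestures at this, but the phrase ``$\varphi$ vanishes in the $H^{1/2}$-trace sense at the corners'' is not quite the right formulation of what the vectorial $H^{1/2}$-condition actually delivers.
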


Consider the symmetric operators $S_\pm=\mp\Delta$, 
$\dom S_\pm=H^2_0(\Omega_\pm)$, and their adjoints
\begin{equation}\label{maxops}
 S_\pm^*=\mp\Delta,\qquad \dom S_\pm^*=\bigl\{f_\pm\in L^2(\Omega_\pm): \Delta f_\pm\in L^2(\Omega_\pm)\bigr\}.
\end{equation}
Since $0\not\in\sigma(A_{D,\pm})$ one has the direct sum decompositions
\begin{equation}\label{deco1}
 \dom S_\pm^*=\dom A_{D\pm}\,\dot +\,\ker S_\pm^*.
\end{equation}
In the following we will often decompose functions $f_\pm\in\dom S_\pm^*$ accordingly, that is, we write
\begin{equation}\label{deco2}
 f_\pm= f_{D\pm}+ f_{0\pm},\qquad f_{D\pm}\in\dom A_{D\pm},\,\, f_{0\pm}\in \ker S_\pm^*. 
\end{equation}
It is also important to note that the spaces $\ker S_\pm^*\cap H^2(\Omega_\pm)$ are dense 
in $\ker S_\pm^*$, where the latter spaces are equipped with the $L^2$-norm 
(or, equivalently with the graph norm of $S_\pm^*$). This fact can be shown with the help of the density result \cite[(6.30)]{GM11} 
for $s=0$.

Recall from \cite[Theorem 6.4]{GM11} that the Dirichlet traces $\gamma_D$ admit continuous and surjective extensions
\begin{equation*}
 \widetilde\gamma_D:\dom S_\pm^* \rightarrow \bigl(\mathscr G_N(\partial\Omega_\pm)\bigr)^*,
\end{equation*}
where $\dom S_\pm^*$ is equipped with the graph norm and $(\mathscr G_N(\partial\Omega_\pm))^*$ is the conjugate dual space of $\mathscr G_N(\partial\Omega_\pm)$
equipped with the corresponding norm.
It is important to note that
\begin{equation}\label{kerndd}
 \ker\widetilde\gamma_D =\ker\gamma_D = \dom A_{D\pm}=H^1_0(\Omega_\pm)\cap H^2(\Omega_\pm),
\end{equation}
where the first equality has been shown in \cite[Section 4.1]{BM14} and the other identities are clear from the above.

We shall denote the duality pairing between $\mathscr G_N(\partial\Omega_\pm)$ and $(\mathscr G_N(\partial\Omega_\pm))^*$ 
in the form
\begin{equation*}
 {}_{\mathscr{G}_N(\partial\Omega_\pm)^*}\langle\psi,\varphi\rangle_{\mathscr{G}_N(\partial\Omega_\pm)},\qquad 
 \psi\in\mathscr{G}_N(\partial\Omega_\pm)^*,\quad\varphi\in\mathscr{G}_N(\partial\Omega_\pm),
\end{equation*}
and occasionally we also write $\psi(\varphi)$ in this situation.

It will also be used later that the Neumann traces $\gamma_{N_\pm}$ admit continuous and surjective extensions
\begin{equation*}
 \widetilde\gamma_{N_\pm}:\dom S_\pm^* \rightarrow \bigl(\mathscr G_D(\partial\Omega_\pm)\bigr)^*;
\end{equation*}
this fact was observed in \cite[Theorem 6.10]{GM11}. Here again 
$\dom S_\pm^*$ is equipped with the graph norm and $(\mathscr G_D(\partial\Omega_\pm))^*$ is the conjugate dual space of $\mathscr G_D(\partial\Omega_\pm)$
equipped with the corresponding norm.

The next proposition shows that a modified Green's identity (with the Neumann trace $\gamma_{N_\pm} f_\pm$ replaced by the regularized
Neumann trace $\gamma_{N_\pm} f_{D\pm}$) remains valid on the maximal domains $\dom S_\pm^*$. This fact is essentially a consequence of
\cite[Theorem~6.4]{GM11}. We also mention that analogous extensions of Green's identity are well known for elliptic operators on
smooth domains, see, e.g. \cite{G68}.

\begin{proposition}\label{green}
The following Green's identity holds for all $f_\pm=f_{D\pm}+f_{0\pm}$ and $g_\pm=g_{D\pm}+g_{0\pm}$ in $\dom S_\pm^*$:
 \begin{equation*}
\begin{split}
&\bigl(S_\pm^* f_\pm,g_\pm\bigr)_{L^2(\Omega_\pm)}-\bigl(f_\pm,S_\pm^*g_\pm\bigr)_{L^2(\Omega_\pm)}\\
&\quad = \pm {}_{\mathscr{G}_N(\partial\Omega_\pm)^*}\big\langle\widetilde\gamma_D f_\pm,
\gamma_{N_\pm} g_{D\pm}\big\rangle_{\mathscr{G}_N(\partial\Omega_\pm)}\mp {}_{\mathscr{G}_N(\partial\Omega_\pm)}\big\langle\gamma_{N_\pm} f_{D\pm},
\widetilde \gamma_D g_\pm\big\rangle_{\mathscr{G}_N(\partial\Omega_\pm)^*}.
\end{split}
\end{equation*}
\end{proposition}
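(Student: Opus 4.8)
The plan is to reduce the asserted identity to the classical second Green identity on $H^2(\Omega_\pm)$ by a density argument, and then to reinterpret the resulting boundary terms through the decomposition \eqref{deco2} and the mapping properties of the extended Dirichlet trace.

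First I would observe that both sides of the asserted identity are separately sesquilinear and continuous on $\dom S_\pm^*\times\dom S_\pm^*$ when $\dom S_\pm^*$ carries the graph norm of $S_\pm^*$. For the left-hand side this is immediate. For the right-hand side it follows from the continuity of $\widetilde\gamma_D\colon\dom S_\pm^*\to(\mathscr G_N(\partial\Omega_\pm))^*$ in \cite[Theorem~6.4]{GM11}, the boundedness of the projections $f_\pm\mapsto f_{D\pm}$ and $f_\pm\mapsto f_{0\pm}$ associated with \eqref{deco1} (these are bounded because $0\in\rho(A_{D\pm})$), the continuity of $\gamma_{N_\pm}$ on $\dom A_{D\pm}$, and the continuity of the duality pairing. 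Since $\ker S_\pm^*\cap H^2(\Omega_\pm)$ is dense in $\ker S_\pm^*$, the subspace $\dom A_{D\pm}\,\dot+\,(\ker S_\pm^*\cap H^2(\Omega_\pm))$ is dense in $\dom S_\pm^*$ for the graph norm, so it suffices to verify the identity for $f_\pm,g_\pm$ in this subspace; for such functions $f_\pm,g_\pm\in H^2(\Omega_\pm)$.

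For $f_\pm,g_\pm\in H^2(\Omega_\pm)$ the classical second Green identity for $\mp\Delta$ reads
\begin{equation*}
(S_\pm^*f_\pm,g_\pm)_{L^2(\Omega_\pm)}-(f_\pm,S_\pm^*g_\pm)_{L^2(\Omega_\pm)}=\pm\left(\langle\gamma_Df_\pm,\gamma_{N_\pm}g_\pm\rangle_{L^2(\partial\Omega_\pm)}-\langle\gamma_{N_\pm}f_\pm,\gamma_Dg_\pm\rangle_{L^2(\partial\Omega_\pm)}\right),
\end{equation*}
with the sign $+$ for $\Omega_+$ and $-$ for $\Omega_-$, the latter because $S_-^*=+\Delta$. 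Inserting $f_\pm=f_{D\pm}+f_{0\pm}$ and $g_\pm=g_{D\pm}+g_{0\pm}$ and using $\gamma_Df_{D\pm}=\gamma_Dg_{D\pm}=0$, the Dirichlet traces may be replaced by $\gamma_Df_{0\pm}$ and $\gamma_Dg_{0\pm}$. Expanding the Neumann traces as $\gamma_{N_\pm}f_\pm=\gamma_{N_\pm}f_{D\pm}+\gamma_{N_\pm}f_{0\pm}$ and similarly for $g_\pm$, the contribution involving only the $0$-components equals, again by the classical Green identity, $(S_\pm^*f_{0\pm},g_{0\pm})_{L^2(\Omega_\pm)}-(f_{0\pm},S_\pm^*g_{0\pm})_{L^2(\Omega_\pm)}$, which vanishes since $f_{0\pm},g_{0\pm}\in\ker S_\pm^*$. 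Hence
\begin{equation*}
(S_\pm^*f_\pm,g_\pm)_{L^2(\Omega_\pm)}-(f_\pm,S_\pm^*g_\pm)_{L^2(\Omega_\pm)}=\pm\langle\gamma_Df_{0\pm},\gamma_{N_\pm}g_{D\pm}\rangle_{L^2(\partial\Omega_\pm)}\mp\langle\gamma_{N_\pm}f_{D\pm},\gamma_Dg_{0\pm}\rangle_{L^2(\partial\Omega_\pm)}.
\end{equation*}

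It remains to match the last display with the claimed right-hand side. By \eqref{kerndd} one has $\widetilde\gamma_Df_\pm=\widetilde\gamma_Df_{0\pm}$ and $\widetilde\gamma_Dg_\pm=\widetilde\gamma_Dg_{0\pm}$, and since $\widetilde\gamma_D$ coincides with $\gamma_D$ on $H^2(\Omega_\pm)$ (a further part of \cite[Theorem~6.4]{GM11}, see also \cite[Section~4.1]{BM14}), these equal $\gamma_Df_{0\pm}$ and $\gamma_Dg_{0\pm}$; moreover, on such regular elements the duality pairing between $(\mathscr G_N(\partial\Omega_\pm))^*$ and $\mathscr G_N(\partial\Omega_\pm)$ is realized by the $L^2(\partial\Omega_\pm)$ inner product, which is again built into the construction in \cite{GM11}. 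Substituting accordingly turns the last display into the assertion of the proposition on the dense subspace, and the continuity established in the first step extends it to all of $\dom S_\pm^*\times\dom S_\pm^*$. The step requiring the most care is the consistent bookkeeping of the conjugations in the various duality pairings together with the orientation-dependent signs for $\Omega_+$ and $\Omega_-$; the substantial analytic input, namely that $\widetilde\gamma_D$ is well defined and continuous on the maximal domain and compatible with $\gamma_D$ on $H^2(\Omega_\pm)$, is already supplied by \cite[Theorem~6.4]{GM11} and \cite[Section~4.1]{BM14}.
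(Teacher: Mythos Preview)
Your proof is correct and reaches the same conclusion, but it takes a somewhat different route from the paper's argument. The paper does not use a density argument at all: instead it invokes directly the ``half-extended'' Green identity from \cite[Theorem~6.4]{GM11},
\[
\bigl(S_\pm^* f_\pm,g_{D\pm}\bigr)-\bigl(f_\pm,A_{D\pm} g_{D\pm}\bigr)
= \pm\,{}_{\mathscr G_N(\partial\Omega_\pm)^*}\big\langle\widetilde\gamma_D f_\pm,\gamma_{N_\pm} g_{D\pm}\big\rangle_{\mathscr G_N(\partial\Omega_\pm)},
\]
which is already valid for arbitrary $f_\pm\in\dom S_\pm^*$ and $g_{D\pm}\in\dom A_{D\pm}$. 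From there the paper performs the same algebraic decomposition you use (splitting off the $D$-- and $0$--parts, using selfadjointness of $A_{D\pm}$ and the kernel property of the $0$--parts), and closes with \eqref{kerndd} to replace $\widetilde\gamma_D f_{0\pm}$ by $\widetilde\gamma_D f_\pm$. Your approach instead steps all the way back to the classical $H^2\times H^2$ Green identity and then passes to the limit; the price is the extra continuity/density layer, which you handle correctly (the key boundedness of $f_\pm\mapsto f_{D\pm}=A_{D\pm}^{-1}S_\pm^* f_\pm$ from the graph norm into $H^2$ is indeed a consequence of $0\in\rho(A_{D\pm})$). The advantage of the paper's route is economy: the continuity of $\widetilde\gamma_D$ and its compatibility with $\gamma_D$ are already packaged into the cited GM11 identity, so no separate limiting step is needed. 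The advantage of your route is that it makes transparent exactly which classical ingredients are being extended and where the density of $\ker S_\pm^*\cap H^2(\Omega_\pm)$ in $\ker S_\pm^*$ enters.
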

\begin{proof}
The identity will only be shown in $L^2(\Omega_+)$. The same argument applies on $\Omega_-$. 
Let $f_+=f_{D+}+f_{0+},\, g_+=g_{D+}+g_{0+}\in \dom S_+^*$ and recall from \cite[Theorem~6.4]{GM11} that
the identity 
\begin{equation*}
\bigl(S_+^* f_+,g_{D+}\bigr)-\bigl(f_+,A_{D+} g_{D+}\bigr)=
{}_{\mathscr{G}_N(\partial\Omega_+)^*}\big\langle\widetilde\gamma_D f_+,
\gamma_{N_+} g_{D+}\big\rangle_{\mathscr{G}_N(\partial\Omega_+)}
\end{equation*}
holds, where we simply write $(\cdot,\cdot)$ 
for the inner product in $L^2(\Omega_+)$. 
Since $A_{D+}$ is selfadjoint in $L^2(\Omega_+)$ it is clear that 
\begin{equation*}
 \bigl(A_{D+}f_{D+},g_{D+}\bigr)-\bigl(f_{D+},A_{D+}g_{D+}\bigr)=0.
\end{equation*}
Moreover, as $f_{0+},g_{0+}\in\ker S_+^*$ we also have
\begin{equation*}
 \bigl(S_+^* f_{0+},g_{0+}\bigr)-\bigl(f_{0+},S_+^* g_{0+}\bigr)=0.
\end{equation*}
Taking this into account we compute
\begin{equation*}
\begin{split}
&(S_+^* f_+,g_+)-(f_+,S_+^*g_+)\\
&\quad =\bigl(S_+^* (f_{D+}+f_{0+}),g_{D+}+g_{0+}\bigr)-\bigl(f_{D+}+f_{0+},S_+^*(g_{D+}+g_{0+})\bigr)\\
&\quad =\bigl(A_{D+}f_{D+},g_{0+}\bigr) + \bigl(S_+^*f_{0+}, g_{D+}\bigr) 
-\bigl(f_{0+},A_{D+}g_{D+}\bigr)-\bigl(f_{D+},S_+^*g_{0+}\bigr)\\
&\quad =\bigl(A_{D+}f_{D+},g_{0+}\bigr)-\bigl(f_{D+},S_+^*g_{0+}\bigr) + \bigl(S_+^*f_{0+}, g_{D+}\bigr)-\bigl(f_{0+},A_{D+}g_{D+}\bigr)\\
&\quad =  - {}_{\mathscr{G}_N(\partial\Omega_+)}\big\langle\gamma_{N_+} f_{D+},
\widetilde \gamma_D g_{0+}\big\rangle_{\mathscr{G}_N(\partial\Omega_+)^*}+
{}_{\mathscr{G}_N(\partial\Omega_+)^*}\big\langle\widetilde\gamma_D f_{0+},
\gamma_{N_+} g_{D+}\big\rangle_{\mathscr{G}_N(\partial\Omega_+)}\\
&\quad = {}_{\mathscr{G}_N(\partial\Omega_+)^*}\big\langle\widetilde\gamma_D f_+,
\gamma_{N_+} g_{D+}\big\rangle_{\mathscr{G}_N(\partial\Omega_+)}- {}_{\mathscr{G}_N(\partial\Omega_+)}\big\langle\gamma_{N_+} f_{D+},
\widetilde \gamma_D g_+\big\rangle_{\mathscr{G}_N(\partial\Omega_+)^*},
\end{split}
\end{equation*}
where we have used $\ker\widetilde\gamma_D=\ker\gamma_D$ from \eqref{kerndd} in the last identity. 
\end{proof}

Next we consider the subspaces
\begin{equation*}
 \mathscr G_\pm:=\bigl\{\varphi\in\mathscr G_N(\partial\Omega_\pm): \varphi\vert_\cC=0 \bigr\}
\end{equation*}
of $\mathscr G_N(\partial\Omega_\pm)$ which consist of functions vanishing on $\cC$.
Denote by $\mathscr G_\pm^\bot\subset (\mathscr G_N(\partial\Omega_\pm))^*$ the corresponding annihilators,
\begin{equation*}
 \mathscr G_\pm^\bot=\bigl\{\psi\in(\mathscr G_N(\partial\Omega_\pm))^*:\psi(\varphi)=0\,\,\text{for all}\,\,\varphi\in\mathscr G_\pm \bigr\}.
\end{equation*}
Roughly speaking $\mathscr G_\pm^\bot$ can be viewed as the linear subspaces of functionals from  $(\mathscr G_N(\partial\Omega_\pm))^*$
that vanish on $\cC_\pm=\partial\Omega_\pm\backslash\cC$. It is important to note that
\begin{equation}\label{anni}
 \mathscr G_\pm^\bot\cong \bigl(\mathscr G_N(\partial\Omega_\pm) / \mathscr G_\pm \bigr)^*.
\end{equation}
In particular, if for some $\varphi\in\mathscr G_N(\partial\Omega_\pm)$ and all $\psi\in \mathscr G_\pm^\bot$ one has $\psi(\varphi)=0$
then $\varphi=0$ when identified with elements in the quotient space $\mathscr G_N(\partial\Omega_\pm) / \mathscr G_\pm $ and
hence $\varphi\in\mathscr G_\pm$, that is, $\varphi\vert_\cC=0$.

\section{An auxiliary symmetric operator $R$}\label{Sec.aux}
In the next proposition we consider a restriction $R$ of the selfadjoint operator $ A_{D+}\oplus A_{D-}$ in $L^2(\Omega)$
and we determine the adjoint of $R$. It will later turn out that the operator $A$ in \eqref{opaopa} is a selfadjoint extension of $R$ (and hence a 
restriction of the adjoint operator  $R^*$).
\begin{proposition}\label{rrprop}
 The operator
 \begin{equation*}
 \begin{split}
  R f&=\cA f=\begin{pmatrix} -\Delta f_+\\ \Delta f_-\end{pmatrix},\\
 \dom R&=\left\{f=\begin{pmatrix}f_+\\ f_-\end{pmatrix}:  f_\pm\in H^2(\Omega_\pm)\cap H^1_0(\Omega_\pm),\, \gamma_{N_+} f_+\vert_\cC=\gamma_{N_-} f_-\vert_\cC\right\},
 \end{split}
\end{equation*}
 is a closed symmetric operator with equal infinite deficiency indices in $L^2(\Omega)$ and $R\subset A_{D+}\oplus A_{D-}$ holds. 
 The adjoint operator is given by
 \begin{equation*}
 \begin{split}
  R^* f&=\cA f=\begin{pmatrix} -\Delta f_+\\ \Delta f_-\end{pmatrix},\\
 \dom R^*&=\left\{f=\begin{pmatrix}f_+\\ f_-\end{pmatrix}: f_\pm,\,\Delta f_\pm\in L^2(\Omega_\pm),\,
 \widetilde\gamma_D f_\pm\in \mathscr G_\pm^\bot,\, \widetilde\gamma_D f_+\vert_\cC=\widetilde\gamma_D f_-\vert_\cC\right\}, 
 \end{split}
\end{equation*}
where the boundary condition $\widetilde\gamma_D f_+\vert_\cC=\widetilde\gamma_D f_-\vert_\cC$ is understood as 
\begin{equation*}
 {}_{\mathscr{G}_N(\partial\Omega_+)^*}\big\langle\widetilde\gamma_D f_+,\varphi\big\rangle_{\mathscr{G}_N(\partial\Omega_+)}=
 {}_{\mathscr{G}_N(\partial\Omega_-)^*}\big\langle\widetilde\gamma_D f_-,\varphi\big\rangle_{\mathscr{G}_N(\partial\Omega_-)}
\end{equation*}
for all $\varphi\in\mathscr G_N(\partial\Omega_\pm)$ such that $\varphi\vert_{\cC_\pm}=0$.
\end{proposition}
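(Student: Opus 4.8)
The plan is to verify the three assertions about $R$ separately: that $R$ is closed and symmetric with $R\subset A_{D+}\oplus A_{D-}$, that its deficiency indices are infinite, and that $R^*$ has the stated form. For the first claim, note that $\dom R$ is a restriction of $\dom(A_{D+}\oplus A_{D-}) = (H^2(\Omega_+)\cap H^1_0(\Omega_+))\times(H^2(\Omega_-)\cap H^1_0(\Omega_-))$ obtained by imposing the single (scalar-valued, but infinite-dimensional in range) condition $\gamma_{N_+}f_+|_\cC = \gamma_{N_-}f_-|_\cC$; here both Neumann traces restricted to $\cC$ lie in a well-defined space because of Lemma~\ref{glem} applied to $\mathscr G_N(\partial\Omega_\pm)$. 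Symmetry follows from the classical Green's identity on each $\Omega_\pm$ (valid since we are in $H^2$): the two boundary terms are $\pm\int_\cC \gamma_{N_\pm}f_\pm\,\overline{\gamma_D g_\pm} + (\text{terms on }\cC_\pm)$; the $\cC_\pm$-terms vanish because $\gamma_D g_\pm = 0$ there (indeed $g_\pm\in H^1_0$), and on $\cC$ the matching conditions $\gamma_D f_+ = \gamma_D f_- \,(=0\text{ on }\cC$, wait---actually $\gamma_D f_\pm=0$ on all of $\partial\Omega_\pm$), so in fact \emph{all} boundary terms vanish and $(Rf,g)=(f,Rg)$ trivially. Closedness follows since $\dom R$ is the kernel of a bounded map on the Hilbert space $\dom(A_{D+}\oplus A_{D-})$ (with graph norm). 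For the infinite deficiency indices: $\dom R$ has infinite codimension in $\dom(A_{D+}\oplus A_{D-})$ because $\mathscr G_N(\partial\Omega_\pm)|_\cC$ is infinite-dimensional, and one can produce explicitly, via separation of variables, infinitely many linearly independent solutions of $R^*f = \pm i f$ (or use that $R$ differs from the selfadjoint operator $A_{D+}\oplus A_{D-}$ by an infinite-dimensional restriction, which forces both deficiency indices to be infinite and equal, the equality also following from the real differential expression admitting a natural conjugation).

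The heart of the proof is the computation of $R^*$. The plan is the standard two-inclusion argument. For ``$\subseteq$'': take $f=(f_+,f_-)^\top\in\dom R^*$; testing against $g\in C_0^\infty(\Omega_\pm)\subset\dom R$ shows $\Delta f_\pm\in L^2(\Omega_\pm)$ and $R^*f = \cA f$, so $f_\pm\in\dom S_\pm^*$ and the extended traces $\widetilde\gamma_D f_\pm$, $\widetilde\gamma_{N_\pm}f_\pm$ are defined. Now apply Proposition~\ref{green} to each $\Omega_\pm$ with $g=g_\pm\in\dom R$ (so $g_\pm\in H^2\cap H^1_0$, hence $g_{D\pm}=g_\pm$, $g_{0\pm}=0$, and $\widetilde\gamma_D g_\pm = \gamma_D g_\pm = 0$): the identity collapses to
\begin{equation*}
 (R^* f,g)_{L^2(\Omega)} - (f,Rg)_{L^2(\Omega)} = \sum_{\pm}\pm\,{}_{\mathscr{G}_N(\partial\Omega_\pm)^*}\big\langle\widetilde\gamma_D f_\pm,\gamma_{N_\pm}g_\pm\big\rangle_{\mathscr{G}_N(\partial\Omega_\pm)}.
\end{equation*}
The left side is zero by definition of $R^*$, so the right side vanishes for all admissible $g$. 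Varying $g$ over $\dom R$ means varying $\gamma_{N_+}g_+$ and $\gamma_{N_-}g_-$ over all pairs in $\mathscr G_N(\partial\Omega_+)\times\mathscr G_N(\partial\Omega_-)$ subject to $\gamma_{N_+}g_+|_\cC = \gamma_{N_-}g_-|_\cC$ (that this is exactly the attainable set of Neumann-trace pairs is where surjectivity of $\gamma_{N_\pm}$ on $\dom A_{D\pm}$, together with Lemma~\ref{glem} to adjust the $\cC_\pm$-parts independently, is used). First choosing $g$ with $\gamma_{N_\pm}g_\pm$ supported on $\cC_\pm$ (one side at a time, using the zero-extension from Lemma~\ref{glem}) forces $\widetilde\gamma_D f_\pm$ to annihilate all $\varphi\in\mathscr G_\pm$, i.e. $\widetilde\gamma_D f_\pm\in\mathscr G_\pm^\bot$. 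Then choosing $g$ with matching nonzero traces on $\cC$ gives $\langle\widetilde\gamma_D f_+,\varphi\rangle = \langle\widetilde\gamma_D f_-,\varphi\rangle$ for all $\varphi$ supported on $\cC$, which is precisely the interface condition $\widetilde\gamma_D f_+|_\cC = \widetilde\gamma_D f_-|_\cC$. This gives $f\in$ (the set on the right).

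For the reverse inclusion ``$\supseteq$'': take $f$ in the described set, so $f_\pm\in\dom S_\pm^*$, $\widetilde\gamma_D f_\pm\in\mathscr G_\pm^\bot$, and the interface condition holds; we must show $f\in\dom R^*$, i.e. $(\cA f,g)_{L^2(\Omega)} = (f,Rg)_{L^2(\Omega)}$ for every $g\in\dom R$. Run Proposition~\ref{green} in reverse: for such $g$ (with $g_{0\pm}=0$, $\widetilde\gamma_D g_\pm=0$) the right-hand boundary sum is again $\sum_\pm \pm\,\langle\widetilde\gamma_D f_\pm,\gamma_{N_\pm}g_\pm\rangle$. Decompose $\gamma_{N_\pm}g_\pm = (\gamma_{N_\pm}g_\pm|_\cC)^\sim + (\gamma_{N_\pm}g_\pm|_{\cC_\pm})^\sim$ via Lemma~\ref{glem}. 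The $\cC_\pm$-parts are killed by $\widetilde\gamma_D f_\pm\in\mathscr G_\pm^\bot$; for the $\cC$-parts, since $\gamma_{N_+}g_+|_\cC = \gamma_{N_-}g_-|_\cC =: \varphi_0$ and this $\varphi_0$ extended by zero is a common test function, the interface condition $\widetilde\gamma_D f_+|_\cC = \widetilde\gamma_D f_-|_\cC$ makes the $+$ and $-$ contributions cancel against each other (the opposite signs $\pm$ are exactly what is needed). Hence the whole boundary term is zero and $(\cA f,g) = (f,Rg)$, so $f\in\dom R^*$ with $R^*f=\cA f$. I expect the main obstacle to be the bookkeeping in the surjectivity/decomposition step: making precise, using surjectivity of $\gamma_{N_\pm}$ onto $\mathscr G_N(\partial\Omega_\pm)$ from \cite{GM11} together with Lemma~\ref{glem}, that the pairs $(\gamma_{N_+}g_+,\gamma_{N_-}g_-)$ realized by $g\in\dom R$ are \emph{exactly} those matching on $\cC$, and that one may prescribe the $\cC$- and $\cC_\pm$-components independently---this is what lets the two scalar conditions in $\dom R^*$ be extracted cleanly and, conversely, be shown sufficient.
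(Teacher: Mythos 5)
Your proof is correct, and for the computation of $R^*$ it takes a genuinely different (and somewhat more economical) route than the paper. The paper introduces the candidate operator $T$ with $\dom T$ equal to the claimed $\dom R^*$, proves $T^*=R$ by two inclusions, then proves separately that $T$ is closed and concludes $R^*=T^{**}=\overline T=T$; in its Step~2 it uses the sandwich $T^*\subset A_{D+}\oplus A_{D-}\subset T$ to get $H^2$-regularity of $\dom T^*$ and the surjectivity of the \emph{extended} Dirichlet trace $\widetilde\gamma_D$ onto $\mathscr G_+^\bot$ to extract the Neumann matching condition. You instead compute $\dom R^*$ directly: your inclusion ``claimed set $\subseteq\dom R^*$'' is, up to swapping the roles of $f$ and $g$ in Proposition~\ref{green}, identical to the paper's Step~1, while your inclusion ``$\dom R^*\subseteq$ claimed set'' replaces the paper's Steps~2--3 by the observation that the attainable pairs $(\gamma_{N_+}g_+,\gamma_{N_-}g_-)$, $g\in\dom R$, are exactly the pairs in $\mathscr G_N(\partial\Omega_+)\times\mathscr G_N(\partial\Omega_-)$ matching on $\cC$ --- which holds because $\mathscr G_N(\partial\Omega_\pm)$ is \emph{defined} as the range of $\gamma_{N_\pm}$ on $\dom A_{D\pm}$, with Lemma~\ref{glem} decoupling the $\cC$- and $\cC_\pm$-components. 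This buys you two things: you never need to verify closedness of the candidate operator (adjoints are automatically closed), and the boundary conditions are extracted from the concrete surjectivity of the genuine Neumann trace rather than from properties of its extension. Your separate treatment of symmetry, closedness (kernel of a bounded trace functional on $\dom(A_{D+}\oplus A_{D-})$ in the graph norm, using $H^2$-regularity on convex domains), and the deficiency indices (via $n_\pm(R)=\dim\bigl(\dom(A_{D+}\oplus A_{D-})/\dom R\bigr)=\infty$) is also sound; the paper obtains the first two as byproducts of $R=T^*$ and $T^*\subset T$. Two small points worth making explicit if you write this up: an explicit witness that the codimension of $\dom R$ is infinite (e.g.\ $g_+(x,y)=x(1-x)\sin(k\pi y)$, $g_-=0$, yields infinitely many independent values of $\gamma_{N_+}g_+\vert_\cC$), and the identification of $\cC$-supported elements of $\mathscr G_N(\partial\Omega_+)$ with those of $\mathscr G_N(\partial\Omega_-)$ via the reflection $x\mapsto -x$, which both you and the paper use tacitly when testing with a common $\varphi$ supported on $\cC$.
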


\begin{proof}
The proof consists of three steps. We define the operator
 \begin{equation*}
 \begin{split}
  T f&:=\cA f=\begin{pmatrix} -\Delta f_+\\ \Delta f_-\end{pmatrix},\\
 \dom T&:=\left\{f=\begin{pmatrix}f_+\\ f_-\end{pmatrix}: f_\pm,\,\Delta f_\pm\in L^2(\Omega_\pm),\,
 \widetilde\gamma_D f_\pm\in \mathscr G_\pm^\bot,\, \widetilde\gamma_D f_+\vert_\cC=\widetilde\gamma_D f_-\vert_\cC\right\},
 \end{split}
\end{equation*}
and it will be shown in Step 1 and Step 2 that $T^*=R$. In Step 3 we verify that $T$ is closed, so that
\begin{equation*}
 R^*=T^{**}=\overline T=T.
\end{equation*}

\vskip 0.15cm\noindent
{\it Step 1.} We verify that $R\subset T^*$ holds. 
For this fix some $f=(f_+,f_-)^\top\in\dom R$, and note that
$f_\pm=f_{D\pm}$ in the decomposition \eqref{deco1}--\eqref{deco2}. 
As both $T$ and $R$ are restrictions of the 
orthogonal sum $S_+^*\oplus S_-^*$ of the maximal operators in \eqref{maxops} it follows from Green's identity in Proposition~\ref{green}
that for any $g\in\dom T$ decomposed in the form
$g_\pm=g_{D\pm}+g_{0\pm}$ we have
\begin{equation*}
\begin{split}
 &\bigl(Rf,g\bigr)_{L^2(\Omega)}-\bigl(f,Tg\bigr)_{L^2(\Omega)}=\bigl((S_+^*\oplus S_-^*)f,g\bigr)_{L^2(\Omega)}-\bigl(f,(S_+^*\oplus S_-^*)g\bigr)_{L^2(\Omega)}\\
 &\quad =\bigl(S_+^* f_+,g_+\bigr)_{L^2(\Omega_+)}-\bigl(f_+,S_+^*g_+\bigr)_{L^2(\Omega_+)} \\
 &\qquad\qquad\qquad\qquad + \bigl(S_-^* f_-,g_-\bigr)_{L^2(\Omega_-)}
 -\bigl(f_-,S_-^*g_-\bigr)_{L^2(\Omega_-)}\\
& \quad = {}_{\mathscr{G}_N(\partial\Omega_+)^*}\big\langle\widetilde\gamma_D f_+,
\gamma_{N_+} g_{D+}\big\rangle_{\mathscr{G}_N(\partial\Omega_+)}- {}_{\mathscr{G}_N(\partial\Omega_+)}\big\langle\gamma_{N_+} f_{D+},
\widetilde \gamma_D g_+\big\rangle_{\mathscr{G}_N(\partial\Omega_+)^*}\\
& \quad\qquad - {}_{\mathscr{G}_N(\partial\Omega_-)^*}\big\langle\widetilde\gamma_D f_-,
\gamma_{N_-} g_{D-}\big\rangle_{\mathscr{G}_N(\partial\Omega_-)} +  {}_{\mathscr{G}_N(\partial\Omega_-)}\big\langle\gamma_{N_-} f_{D-},
\widetilde \gamma_D g_-\big\rangle_{\mathscr{G}_N(\partial\Omega_-)^*}\\
&\quad = - {}_{\mathscr{G}_N(\partial\Omega_+)}\big\langle\gamma_{N_+} f_+,
\widetilde \gamma_D g_+\big\rangle_{\mathscr{G}_N(\partial\Omega_+)^*} +  {}_{\mathscr{G}_N(\partial\Omega_-)}\big\langle\gamma_{N_-} f_-,
\widetilde \gamma_D g_-\big\rangle_{\mathscr{G}_N(\partial\Omega_-)^*},
\end{split}
\end{equation*}
where in the last step we have used that 
for $f=f_+\oplus f_-\in\dom R$ one has $f_\pm=f_{D\pm}$, and $f_\pm\in H^1_0(\Omega_\pm)$, so that, $\widetilde\gamma_D f_\pm=0$; cf. \eqref{kerndd}.
Next we decompose  $\gamma_{N_\pm} f_\pm$ in the form
\begin{equation}\label{gammanfdeco}
 \gamma_{N_\pm} f_\pm=\bigl(\gamma_{N_\pm} f_\pm\vert_\cC\bigr)^\sim+ \bigl(\gamma_{N_\pm} f_\pm\vert_{\cC_\pm}\bigr)^\sim,
\end{equation}
where both extensions by $0$ on the right hand side belong to the space $\mathscr{G}_N(\partial\Omega_\pm)$ (see Lemma~\ref{glem}), and in particular
\begin{equation*}
 \bigl(\gamma_{N_\pm} f_{D\pm}\vert_{\cC_\pm}\bigr)^\sim\in\mathscr G_\pm.
\end{equation*}
Since  $g\in\dom T$ we have $\widetilde\gamma_D g_\pm\in\mathscr G_\pm^\bot$ and therefore
\begin{equation*}
 {}_{\mathscr{G}_N(\partial\Omega_\pm)}\big\langle\bigl(\gamma_{N_\pm} f_\pm\vert_{\cC_\pm}\bigr)^\sim,
\widetilde \gamma_D g_\pm\big\rangle_{\mathscr{G}_N(\partial\Omega_\pm)^*}=0.
\end{equation*}
Hence we conclude
\begin{equation}\label{randterm2}
 \begin{split}
 \bigl(Rf,g\bigr)_{L^2(\Omega)}-\bigl(f,Tg\bigr)_{L^2(\Omega)} &=- {}_{\mathscr{G}_N(\partial\Omega_+)}\big\langle\bigr(\gamma_{N_+} f_+\vert_\cC\bigr)^\sim,
\widetilde \gamma_D g_+\big\rangle_{\mathscr{G}_N(\partial\Omega_+)^*}\\
 &\qquad +
{}_{\mathscr{G}_N(\partial\Omega_-)}\big\langle\bigr(\gamma_{N_-} f_-\vert_\cC\bigr)^\sim,
\widetilde \gamma_D g_-\big\rangle_{\mathscr{G}_N(\partial\Omega_-)^*}.
\end{split}
\end{equation}
Since $f\in\dom R$ and $g\in\dom T$ we obtain 
\begin{equation*}
\gamma_{N_+} f_+\vert_\cC=\gamma_{N_-} f_-\vert_\cC\quad\text{and}\quad\widetilde\gamma_D g_+\vert_\cC=\widetilde\gamma_D g_-\vert_\cC.
\end{equation*}
This and \eqref{randterm2} implies that $(Rf,g)_{L^2(\Omega)}-(f,Tg)_{L^2(\Omega)}=0$ holds for all $g\in\dom T$. Therefore $f\in\dom T^*$ and $T^*f=Rf$.
We have shown $R\subset T^*$.

\vskip 0.15cm\noindent
{\it Step 2.} We now verify the opposite inclusion $T^*\subset R$. 
For this observe first that the orthogonal sum of the Dirichlet operator 
$A_{D+}\oplus A_{D-}$ is a selfadjoint restriction of $T$, 
and hence we have
\begin{equation}\label{ttt}
  T^* \subset A_{D+}\oplus A_{D-} \subset T. 
\end{equation}
Let  $f=(f_+,f_-)^\top\in\dom T^*$. Then $f_\pm\in H^2(\Omega_\pm)\cap H^1_0(\Omega_\pm)$ 
and $f_\pm=f_{D\pm}$ in the decomposition \eqref{deco1}--\eqref{deco2}. It remains to show that the boundary condition
\begin{equation}\label{bc}
 \gamma_{N_+} f_+\vert_\cC=\gamma_{N_-} f_-\vert_\cC
\end{equation}
is satisfied. For this note that by \eqref{kerndd} 
we also have $\widetilde\gamma_D f_\pm=0$. For $g\in\dom T$ we
obtain in the same way as in Step 1 of the proof that
\begin{equation*}
\begin{split}
0&=\bigl(T^*f,g\bigr)_{L^2(\Omega)} - \bigl(f,Tg\bigr)_{L^2(\Omega)}\\ 
&= - {}_{\mathscr{G}_N(\partial\Omega_+)}\big\langle\gamma_{N_+} f_+,
\widetilde \gamma_D g_+\big\rangle_{\mathscr{G}_N(\partial\Omega_+)^*} +  {}_{\mathscr{G}_N(\partial\Omega_-)}\big\langle\gamma_{N_-} f_-,
\widetilde \gamma_D g_-\big\rangle_{\mathscr{G}_N(\partial\Omega_-)^*}.
\end{split}
\end{equation*}
Next we decompose $\gamma_{N_\pm} f_\pm$ as in \eqref{gammanfdeco} and use that $\widetilde\gamma_D g_\pm\in\mathscr G_\pm^\bot$. As in Step 1 this leads to
\begin{equation*}
0= {}_{\mathscr{G}_N(\partial\Omega_+)}\big\langle\bigr(\gamma_{N_+} f_+\vert_\cC\bigr)^\sim,
\widetilde \gamma_D g_+\big\rangle_{\mathscr{G}_N(\partial\Omega_+)^*}-
{}_{\mathscr{G}_N(\partial\Omega_-)}\big\langle\bigr(\gamma_{N_-} f_-\vert_\cC\bigr)^\sim,
\widetilde \gamma_D g_-\big\rangle_{\mathscr{G}_N(\partial\Omega_-)^*}
\end{equation*}
for all $g=(g_+,g_-)^\top\in\dom T$. Furthermore, since $\widetilde \gamma_D g_+\vert_\cC=\widetilde \gamma_D g_-\vert_\cC$ we find
\begin{equation*}
0= {}_{\mathscr{G}_N(\partial\Omega_+)}\big\langle\bigr(\gamma_{N_+} f_+\vert_\cC\bigr)^\sim-\bigr(\gamma_{N_-} f_-\vert_\cC\bigr)^\sim,
\widetilde \gamma_D g_+\big\rangle_{\mathscr{G}_N(\partial\Omega_+)^*}.
\end{equation*}
This relation holds true for all $g=(g_+,g_-)^\top\in\dom T$, and hence for all elements $\psi=\widetilde \gamma_D g_+\in\mathscr G_+^\bot$.
Now it follows from \eqref{anni} and the observation below \eqref{anni} that the function
\begin{equation*}
 \bigr(\gamma_{N_+} f_+\vert_\cC\bigr)^\sim-\bigr(\gamma_{N_-} f_-\vert_\cC\bigr)^\sim
\end{equation*}
vanishes on $\cC$.
Thus the 
boundary condition \eqref{bc} is satisfied. We have shown $f\in\dom T$ and hence $R^*\subset T$.

\vskip 0.15cm\noindent
{\it Step 3.} We show that $T$ is closed. Let $(f_n)\subset\dom T$ such that $f_n\rightarrow f$ and $Tf_n\rightarrow h$ for some $f=(f_+,f_-)^\top$, 
$h=(h_+,h_-)^\top\in L^2(\Omega)$.
Since $T\subset S_+^*\oplus S_-^*$ and $S_+^*\oplus S_-^*$ is closed it follows that 
\begin{equation*}
f_\pm \in\dom S_\pm^*\qquad\text{and}\qquad S_\pm^*f_\pm=h_\pm.
\end{equation*}
Thus it remains to show that the boundary conditions
\begin{equation*}
 \widetilde\gamma_D f_\pm \in\mathscr G_\pm^\bot\quad\text{and}\quad \widetilde\gamma_D f_+\vert_\cC=\widetilde\gamma_D f_-\vert_\cC
\end{equation*}
hold. But this follows immediately since $f_{n\pm}\rightarrow f_\pm$ in the graph norm of $S_\pm^*$ and $\widetilde\gamma_D$ is continuous with 
respect to the graph norm, so that, $\widetilde \gamma_D f_{n\pm}\rightarrow \widetilde\gamma_D f_\pm$ in $(\mathscr G_N(\partial\Omega_\pm))^*$.
\end{proof}

The following lemma states that the Neumann traces of the functions from $\ker R^*$ coincide on $\cC$. This property is essentially
a consequence of the symmetry of the domain $\Omega$ and the function $\sgn(\cdot)$ with respect to the interface $\cC$. For completeness
we mention that the functions 
\begin{equation}\label{efs}
 f_{0,k}(x,y)=\begin{cases} \sinh (k\pi (1-x))\sin (k\pi y), & (x,y)\in\Omega_+,\\ \sinh (k\pi (1+x))\sin (k\pi y), & (x,y)\in\Omega_-, \end{cases}
 \quad k \in \mathbb{N} = \{1,2,\dots\},
\end{equation}
span a dense set in $\ker R^*$; cf. Proposition~\ref{Prop.spec}~(iv).

\begin{lemma}\label{rrlem}
Let $R$ and $R^*$ be as in Proposition~\ref{rrprop}. Then the following hold.
\begin{itemize}
 \item [{\rm (i)}] The space $\ker R^*$ is infinite dimensional and the functions $f_0\in\ker R^*$ satisfy 
\begin{equation}\label{interface}
\widetilde\gamma_{N_+} f_{0+}\vert_\cC=\widetilde\gamma_{N_-} f_{0-}\vert_\cC,
\end{equation}
that is, 
\begin{equation*}
 {}_{\mathscr{G}_D(\partial\Omega_+)^*}\big\langle\widetilde\gamma_{N_+} f_{0+},\varphi\big\rangle_{\mathscr{G}_D(\partial\Omega_+)}=
 {}_{\mathscr{G}_D(\partial\Omega_-)^*}\big\langle\widetilde\gamma_{N_-} f_{0-},\varphi\big\rangle_{\mathscr{G}_D(\partial\Omega_-)}
\end{equation*}
holds for all $\varphi\in\mathscr G_D(\partial\Omega_\pm)$ such that $\varphi\vert_{\cC_\pm}=0$;
\item [{\rm (ii)}] $R$ is invertible and has closed range. 
\end{itemize}
\end{lemma}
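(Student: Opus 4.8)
The plan is to treat the two assertions separately, first establishing the interface identity~\eqref{interface} for Neumann traces of functions in $\ker R^*$ by exploiting the reflection symmetry $x\mapsto -x$, and then deducing the invertibility and closed range of $R$ from the corresponding properties of the Dirichlet operators $A_{D\pm}$.

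For part~(i), I would begin by noting that $\dim\ker R^*=\infty$: since $R\subset A_{D+}\oplus A_{D-}$ is a restriction with infinite deficiency indices (Proposition~\ref{rrprop}), and $0\in\rho(A_{D+}\oplus A_{D-})$, a standard extension-theory count gives $\dim\ker R^*=\dim\ker(R^*\uphar)=$ the (infinite) defect number; alternatively one points directly to the functions $f_{0,k}$ in~\eqref{efs}, which solve $-\Delta f_{0,k+}=0$, $\Delta f_{0,k-}=0$ with the required Dirichlet and interface conditions, are linearly independent, and span a dense subset of $\ker R^*$ by Proposition~\ref{Prop.spec}~(iv). For the interface identity, let $f_0=(f_{0+},f_{0-})^\top\in\ker R^*$ and introduce the reflection $(Jf)(x,y):=f(-x,y)$, which maps $L^2(\Omega_-)$ onto $L^2(\Omega_+)$ and vice versa, intertwines $\Delta$ with $\Delta$, and maps the outward normal at $\cC$ for $\Omega_-$ to the outward normal at $\cC$ for $\Omega_+$ with a sign flip. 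The key point is that the defining conditions of $\dom R^*$ (membership in $\dom S_\pm^*$, $\widetilde\gamma_D f_\pm\in\mathscr G_\pm^\bot$, and $\widetilde\gamma_D f_+\vert_\cC=\widetilde\gamma_D f_-\vert_\cC$) are symmetric under swapping $\Omega_+\leftrightarrow\Omega_-$ via $J$; hence $g_0:=(Jf_{0-},Jf_{0+})^\top$ again lies in $\ker R^*$, and by density it suffices to check~\eqref{interface} on the explicit basis~\eqref{efs}, where the hyperbolic sine profiles are manifestly mirror-symmetric so that the two (oppositely oriented) normal derivatives at $\cC$ agree. One then transfers this identity to all of $\ker R^*$ by continuity of $\widetilde\gamma_{N_\pm}$ in the graph norm and the density of $\spn\{f_{0,k}\}$, using that the pairing against a fixed $\varphi\in\mathscr G_D(\partial\Omega_\pm)$ with $\varphi\vert_{\cC_\pm}=0$ is a continuous functional on $\dom S_\pm^*$.

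For part~(ii), the cleanest route is: $R\subset A_{D+}\oplus A_{D-}$, and $A_{D+}\oplus A_{D-}$ is boundedly invertible because $0\notin\sigma(A_{D\pm})$ (indeed $A_{D+}$ is uniformly positive and $A_{D-}$ uniformly negative). Hence $R$ is injective, and for $f\in\dom R$ we have $\|f\|\le \|(A_{D+}\oplus A_{D-})^{-1}\|\,\|Rf\|$, which simultaneously shows that $R$ is bounded below and therefore has closed range (a closed operator that is bounded below has closed range). That $R$ is invertible on its range is then immediate; if one wants $0\in\rho(R)$ in the sense that $\ran R$ is all of $L^2(\Omega)$ this is \emph{not} claimed and indeed false, so I would state (ii) exactly as written: injectivity plus closedness of the range.

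The main obstacle is the rigorous justification of the reflection argument in part~(i): one must verify carefully that $J$ maps $\dom S_-^*$ onto $\dom S_+^*$ together with the regularized trace spaces $\mathscr G_\pm^\bot$ and the extended Dirichlet trace $\widetilde\gamma_D$ correctly — i.e. that $\widetilde\gamma_D(Jf_-) = J(\widetilde\gamma_D f_-)$ under the induced boundary reflection, and that the orientation reversal of the normal at $\cC$ produces exactly the sign needed so that the interface condition is preserved rather than negated. Once this functorial behaviour of the traces under $J$ is in hand, reducing~\eqref{interface} to the explicit functions~\eqref{efs} and invoking density makes the rest routine. Part~(ii) presents no real difficulty beyond quoting the uniform definiteness of $A_{D\pm}$.
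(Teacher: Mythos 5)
Your part (ii) matches the paper's argument and is fine. For part (i) the reflection idea is exactly the paper's, but your execution has two gaps, one of which is a genuine circularity.

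First, you propose to verify \eqref{interface} on the explicit functions $f_{0,k}$ of \eqref{efs} and then extend ``by density'', citing Proposition~\ref{Prop.spec}~(iv) for the density of $\spn\{f_{0,k}\}$ in $\ker R^*$. That proposition is proved only after --- and by means of --- the selfadjointness of $A$ (Theorem~\ref{mainthm}), whose proof uses the present lemma; the paper quotes \eqref{efs} before the lemma only ``for completeness'' precisely because it cannot be used here. The paper instead relies on a density statement independent of the lemma: $\ker S_\pm^*\cap H^2(\Omega_\pm)$ is dense in $\ker S_\pm^*$ (via \cite[(6.30)]{GM11}). It proves \eqref{interface} first for those $f_0\in\ker R^*$ with $f_{0\pm}\in H^2(\Omega_\pm)$, where the classical traces $\gamma_{N_\pm}$ are available, and then passes to general $f_0$ using the continuity of the extended traces $\widetilde\gamma_{N_\pm}$ on $\dom S_\pm^*$. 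If you keep your structure you must replace the appeal to Proposition~\ref{Prop.spec}~(iv) by this (or an equivalent non-circular) density argument.

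Second, even for $H^2$-regular $f_0$, knowing that $g_0=(Jf_{0-},Jf_{0+})^\top$ again lies in $\ker R^*$ does not by itself yield $f_{0-}=Jf_{0+}$, which is what the Neumann-trace identity requires. The missing ingredient is a uniqueness statement: since $A_{D+}\oplus A_{D-}\subset R^*$ and $0\notin\sigma(A_{D\pm})$, one has $\dom R^*=\dom(A_{D+}\oplus A_{D-})\,\dot+\,\ker R^*$, and together with \eqref{kerndd} this makes the map $f_0\mapsto(\widetilde\gamma_D f_{0+},\widetilde\gamma_D f_{0-})$ injective on $\ker R^*$. The function $h:=Jf_{0+}$ is harmonic on $\Omega_-$ with $\gamma_D h\vert_\cC=\gamma_D f_{0+}\vert_\cC=\gamma_D f_{0-}\vert_\cC$ and $\gamma_D h\vert_{\cC_-}=0$, so $(f_{0+},h)^\top\in\ker R^*$ has the same Dirichlet data as $f_0$; injectivity then forces $f_{0-}=h$, whence $\gamma_{N_-}f_{0-}\vert_\cC=\gamma_{N_+}f_{0+}\vert_\cC$. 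With these two repairs your proof coincides with the paper's.
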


\begin{proof}
(i)
As $A_{D+}\oplus A_{D-}\subset R^*$ and $0\not\in\sigma(A_{D_\pm})$ we have the direct sum decomposition
\begin{equation*}
 \dom R^*= \dom\bigl( A_{D+}\oplus A_{D-}\bigr)\,\dot +\,\ker R^*.
\end{equation*}
Together with \eqref{kerndd} this yields that the mapping 
\begin{equation}\label{wtwt}
 \widetilde\Gamma_D:\ker R^*\rightarrow\mathscr G_N(\partial\Omega_+)\times \mathscr G_N(\partial\Omega_-),\quad
 f_0=\begin{pmatrix} f_{0+}\\ f_{0-}\end{pmatrix}\mapsto \begin{pmatrix}\widetilde\gamma_D f_{0+}\\ \widetilde\gamma_D f_{0-}\end{pmatrix},
\end{equation}
is invertible. Suppose now that $f_0=(f_{0+},f_{0-})^\top\in\ker R^*$ and assume, in addition, that $f_{0\pm}\in H^2(\Omega_\pm)$. 
Then $\Delta f_{0\pm}=0$ and the boundary conditions have the explicit form 
\begin{equation}
 \gamma_D f_{0\pm}\vert_{\cC_\pm}=0\qquad\text{and}\qquad \gamma_D f_{0+}\vert_\cC=\gamma_D f_{0-}\vert_\cC;
\end{equation}
here $\gamma_D$ is the Dirichlet trace operator defined on $H^2(\Omega_\pm)$. 
It follows that the function 
$$h(x,y):=f_{0+}(-x,y),\qquad x\in (-1,0),\,\,y\in(0,1),$$
belongs to $L^2(\Omega_-)$ and satisfies $\Delta h=0$ and $\gamma_D h\vert_\cC=\gamma_D f_{0+}\vert_\cC$ and $\gamma_D h\vert_{\cC_-}=0$.
Hence $(f_{0+},h)^\top\in\ker R^*$ but as the map $\widetilde\Gamma_D$ in \eqref{wtwt} is invertible 
we conclude $f_{0-}=h$. In particular, if $\gamma_{N_\pm}$ denotes the Neumann trace operator on $H^2(\Omega_\pm)$ we obtain 
$$\gamma_{N_-} f_{0-}\vert_\cC=\gamma_{N_-} h\vert_\cC=\gamma_{N_+} f_{0+}\vert_\cC.$$
As $\widetilde\gamma_{N_\pm}$ are extensions of $\gamma_{N_\pm}$ this yields 
\begin{equation*}
 {}_{\mathscr{G}_D(\partial\Omega_+)^*}\big\langle\widetilde\gamma_{N_+} f_{0+},\varphi\big\rangle_{\mathscr{G}_D(\partial\Omega_+)}=
 {}_{\mathscr{G}_D(\partial\Omega_-)^*}\big\langle\widetilde\gamma_{N_-} f_{0-},\varphi\big\rangle_{\mathscr{G}_D(\partial\Omega_-)}
\end{equation*}
for all $\varphi\in\mathscr G_D(\partial\Omega_\pm)$ such that $\varphi\vert_{\cC_\pm}=0$.
We have shown that any function $f_0\in\ker R^*$ with the additional property $f_{0\pm}\in H^2(\Omega_\pm)$ satisfies the condition \eqref{interface}.
The general case follows from $R^*\subset S_+^*\oplus S_-^*$, the fact that $\ker S_\pm^*\cap H^2(\Omega_\pm)$ is dense in $\ker S_\pm^*$
and the continuity of the extended Neumann trace maps $\widetilde\gamma_{N_\pm}$.

\vskip 0.15cm\noindent
(ii) Since $R\subset A_{D+}\oplus A_{D-}\subset R^*$ and $0\not\in\sigma(A_{D_\pm})$ it follows that $\ker R=\{0\}$. In order to see that $\ran R$ is closed
assume that $Rf_n\rightarrow g$, $n\rightarrow\infty$, for some $g\in L^2(\Omega)$. It is clear that also $(A_{D+}\oplus A_{D-})f_n\rightarrow g$, $n\rightarrow\infty$,
and from $0\not\in\sigma(A_{D_\pm})$ we conclude 
\begin{equation*}
 f_n\rightarrow f:=\bigl(A_{D+}^{-1}\oplus A_{D-}^{-1}\bigr) g,\qquad n\rightarrow\infty.
\end{equation*}
Since $R$ is closed
we find $f\in\dom R$ and $Rf=g$. 
\end{proof}

\section{The selfadjoint operator $A$ and its qualitative spectral properties}\label{Sec.ss}

In this section we present the main result of this note. The operator $A$ (informally written in \eqref{opaopa}) is now defined rigorously with explicit boundary
conditions as a restriction of the maximal operator $S_+^*\oplus S_-^*$. It is shown that $A$ is selfadjoint in $L^2(\Omega)$ and it turns out 
that $A$ can be viewed as a generalized Krein-von Neumann extension of the non-semibounded symmetric 
operator $R$ (see also Proposition~\ref{mainprop} below). 

\begin{theorem}\label{mainthm}
 The operator
 \begin{equation}\label{opa}
 \begin{split}
  A f&=\cA f=\begin{pmatrix} -\Delta f_+\\ \Delta f_-\end{pmatrix},\\
 \dom A&=\left\{f=\begin{pmatrix}f_+\\ f_-\end{pmatrix}: 
 \begin{matrix} 
 f_\pm,\,\Delta f_\pm\in L^2(\Omega_\pm),\, \widetilde\gamma_D f_\pm\in \mathscr G_\pm^\bot,\\
 \widetilde\gamma_D f_+\vert_\cC=\widetilde\gamma_D f_-\vert_\cC,\,\widetilde\gamma_{N_+} f_+\vert_\cC=\widetilde\gamma_{N_-} f_-\vert_\cC
 \end{matrix}
 \right\},
 \end{split}
\end{equation}
is selfadjoint in $L^2(\Omega)$ and coincides with the operator $R^*\upharpoonright \dom R\,\dot +\,\ker R^*$.
The boundary conditions $\widetilde\gamma_D f_+\vert_\cC=\widetilde\gamma_D f_-\vert_\cC$ and 
$\widetilde\gamma_{N_+} f_+\vert_\cC=\widetilde\gamma_{N_-} f_-\vert_\cC$
are understood as 
\begin{equation*}
 {}_{\mathscr{G}_N(\partial\Omega_+)^*}\big\langle\widetilde\gamma_D f_+,\varphi\big\rangle_{\mathscr{G}_N(\partial\Omega_+)}=
 {}_{\mathscr{G}_N(\partial\Omega_-)^*}\big\langle\widetilde\gamma_D f_-,\varphi\big\rangle_{\mathscr{G}_N(\partial\Omega_-)}
\end{equation*}
for all $\varphi\in\mathscr G_N(\partial\Omega_\pm)$ such that $\varphi\vert_{\cC_\pm}=0$, and
\begin{equation*}
 {}_{\mathscr{G}_D(\partial\Omega_+)^*}\big\langle\widetilde\gamma_N f_+,\psi\big\rangle_{\mathscr{G}_D(\partial\Omega_+)}=
 {}_{\mathscr{G}_D(\partial\Omega_-)^*}\big\langle\widetilde\gamma_N f_-,\psi\big\rangle_{\mathscr{G}_D(\partial\Omega_-)}
\end{equation*}
for all $\psi\in\mathscr G_D(\partial\Omega_\pm)$ such that $\psi\vert_{\cC_\pm}=0$, respectively.
\end{theorem}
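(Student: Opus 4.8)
The plan is to identify the operator~$A$ from~\eqref{opa} with a generalized Krein--von Neumann extension of the symmetric operator~$R$ from Proposition~\ref{rrprop}, and then to match this abstract description with the explicit boundary conditions. Since~$R$ is injective and has closed range by Lemma~\ref{rrlem}(ii), the sum $\dom R\,\dot +\,\ker R^*$ is direct, so that $A_0:=R^*\upharpoonright(\dom R\,\dot +\,\ker R^*)$ is a well-defined operator which acts as~$R$ on $\dom R$ and as~$0$ on $\ker R^*$. The first step is to show that $A_0$ is selfadjoint. This is a soft consequence of the properties of~$R$ recorded in Section~\ref{Sec.aux}: by Lemma~\ref{rrlem}(ii) one has the orthogonal decomposition $L^2(\Omega)=\ran R\oplus\ker R^*$, and combining this with the symmetry and injectivity of~$R$ a short abstract computation shows that $A_0$ is symmetric and that every $g\in\dom A_0^*$ decomposes as a sum of an element of $\dom R$ and an element of $\ker R^*$, whence $A_0^*\subset A_0$ and thus $A_0=A_0^*$; this is the content of the abstract Proposition~\ref{mainprop} below, which I would invoke here.

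Next I would prove the inclusion $A_0\subset A$. If $f\in\dom R$, then $f_\pm\in H^2(\Omega_\pm)\cap H^1_0(\Omega_\pm)$, so $\widetilde\gamma_D f_\pm=0$ by~\eqref{kerndd} and the first three conditions in~\eqref{opa} are trivial; moreover $\widetilde\gamma_{N_\pm}f_\pm=\gamma_{N_\pm}f_\pm$ for $H^2$-functions and $\gamma_{N_+}f_+\vert_\cC=\gamma_{N_-}f_-\vert_\cC$ by the definition of $\dom R$, so the Neumann interface condition holds as well. If $f_0\in\ker R^*$, then $f_0\in\dom R^*$ satisfies the first three conditions in~\eqref{opa} by Proposition~\ref{rrprop}, and the Neumann interface condition $\widetilde\gamma_{N_+}f_{0+}\vert_\cC=\widetilde\gamma_{N_-}f_{0-}\vert_\cC$ is precisely Lemma~\ref{rrlem}(i). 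Since all four defining conditions in~\eqref{opa} are linear, every $f\in\dom R\,\dot +\,\ker R^*$ lies in $\dom A$, and both $A_0$ and $A$ act as~$\cA$; hence $A_0\subset A$.

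For the reverse inclusion $A\subset A_0$, let $f=(f_+,f_-)^\top\in\dom A$. Comparing~\eqref{opa} with the description of $\dom R^*$ in Proposition~\ref{rrprop} shows $\dom A\subset\dom R^*$, and using $0\notin\sigma(A_{D\pm})$ together with the direct sum decomposition $\dom R^*=\dom(A_{D+}\oplus A_{D-})\,\dot +\,\ker R^*$ from the proof of Lemma~\ref{rrlem}(i) we may write $f=f_D+f_0$ with $f_D=(f_{D+},f_{D-})\in\dom(A_{D+}\oplus A_{D-})$ and $f_0=(f_{0+},f_{0-})\in\ker R^*$; in particular $f_{D\pm}\in H^2(\Omega_\pm)\cap H^1_0(\Omega_\pm)$. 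Since $\widetilde\gamma_{N_\pm}$ extends $\gamma_{N_\pm}$ and $f_{D\pm}\in H^2(\Omega_\pm)$, we have $\widetilde\gamma_{N_\pm}f_\pm=\gamma_{N_\pm}f_{D\pm}+\widetilde\gamma_{N_\pm}f_{0\pm}$; subtracting the identity $\widetilde\gamma_{N_+}f_{0+}\vert_\cC=\widetilde\gamma_{N_-}f_{0-}\vert_\cC$ of Lemma~\ref{rrlem}(i) from the Neumann interface condition satisfied by~$f$ gives $\gamma_{N_+}f_{D+}\vert_\cC=\gamma_{N_-}f_{D-}\vert_\cC$, first in the sense of the duality pairing against functions in $\mathscr G_D(\partial\Omega_\pm)$ supported on~$\cC$ and then, by a density argument relying on the regularity of the Neumann traces of $H^2(\Omega_\pm)$-functions (cf.~\cite{GK00} and the discussion preceding Lemma~\ref{glem}), as an equality of functions on~$\cC$. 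Therefore $f_D\in\dom R$, so $f\in\dom R\,\dot +\,\ker R^*=\dom A_0$ and $Af=\cA f=A_0f$; hence $A\subset A_0$. Together with $A_0\subset A$ and the selfadjointness of~$A_0$ this yields $A=A_0=A^*$, and the identity $A=R^*\upharpoonright(\dom R\,\dot +\,\ker R^*)$ is immediate from the construction.

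I expect the main obstacle to be the last paragraph, and more precisely the passage from the distributional form of the Neumann interface condition — a statement about duality pairings with test functions in $\mathscr G_D(\partial\Omega_\pm)$ — to an honest equality of the regularized Neumann traces $\gamma_{N_\pm}f_{D\pm}$ on~$\cC$, which is what is needed to place $f_D$ in $\dom R$. This is exactly the point where Lemma~\ref{rrlem}(i) and the trace results of~\cite{GM11,BM14,GK00} are indispensable, and where one must keep careful track of the different boundary trace spaces and of the identification of the extension-by-zero maps on~$\cC$ for the two squares $\Omega_\pm$. By comparison, the abstract selfadjointness of~$A_0$ in the first step is routine once the closed-range property from Lemma~\ref{rrlem}(ii) is in hand.
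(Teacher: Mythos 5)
Your argument is correct and rests on the same pillars as the paper's proof -- the Krein--von Neumann structure $R^*\upharpoonright(\dom R\,\dot+\,\ker R^*)$, Lemma~\ref{rrlem}(i) for the Neumann traces of $\ker R^*$, and Lemma~\ref{rrlem}(ii) for the closed-range/regular-type property -- but the logical architecture differs in the crucial direction. The paper never proves $A\subset A_0$ head-on: it shows (a) $A$ is symmetric, by running the generalized Green's identity of Proposition~\ref{green} on $\dom A$ and killing all boundary terms, and (b) $A_0\subset A$ with $A_0$ selfadjoint (the latter via symmetry of $A_0$ plus $\ran(A_0-\mu)=\ran(R-\mu)\,\dot+\,\ker R^*=L^2(\Omega)$ for nonreal $\mu$, not via a computation of $A_0^*$ as you suggest); maximality then forces $A=A_0$. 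Your route instead establishes the inclusion $\dom A\subset\dom R\,\dot+\,\ker R^*$ directly by decomposing $f=f_D+f_0$ and upgrading the weak Neumann interface condition to $\gamma_{N_+}f_{D+}\vert_\cC=\gamma_{N_-}f_{D-}\vert_\cC$ as functions, which is exactly what is needed to place $f_D$ in $\dom R$. You correctly identify this upgrade -- passing from equality of pairings against $\psi\in\mathscr G_D(\partial\Omega_\pm)$ with $\psi\vert_{\cC_\pm}=0$ to equality in $L^2(\cC)$ of the $\mathscr G_N$-regular traces $\gamma_{N_\pm}f_{D\pm}$ -- as the delicate point; note that the paper's proof of symmetry of $A$ performs the very same step (the chain $0=\widetilde\gamma_{N_+}f_+\vert_\cC-\widetilde\gamma_{N_-}f_-\vert_\cC=\gamma_{N_+}f_{D+}\vert_\cC-\gamma_{N_-}f_{D-}\vert_\cC$, used there to make the terms in the analogue of \eqref{randterm2} vanish), so neither approach escapes it. What the paper's route buys is that one direction of the domain identity comes for free from maximality; what yours buys is a self-contained identification of $\dom A$ without invoking $A\subset A^*$. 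One small correction: the abstract selfadjointness statement you attribute to Proposition~\ref{mainprop} is not there -- that proposition concerns the spectrum of $A$ -- so the short argument for selfadjointness of $A_0$ must be written out (as the paper does, following \cite{K47,AGMST10}).
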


\begin{proof}
We first show that $A\subset A^*$ holds.
Since $A\subset R^*\subset S_+^*\oplus S_-^*$  we have for $f,g\in\dom A$ 
decomposed in the usual form $f_\pm=f_{D\pm}+f_{0\pm}$, $g_\pm=g_{D\pm}+g_{0\pm}$ (see \eqref{deco1}--\eqref{deco2})  
\begin{equation*}
\begin{split}
 &\bigl(Af,g\bigr)_{L^2(\Omega)}-\bigl(f,Ag\bigr)_{L^2(\Omega)}=\bigl((S_+^*\oplus S_-^*)f,g\bigr)_{L^2(\Omega)}-\bigl(f,(S_+^*\oplus S_-^*)g\bigr)_{L^2(\Omega)}\\
& \quad = {}_{\mathscr{G}_{N}(\partial\Omega_+)^*}\big\langle\widetilde\gamma_D f_+,
\gamma_{N_+} g_{D+}\big\rangle_{\mathscr{G}_N(\partial\Omega_+)}- {}_{\mathscr{G}_N(\partial\Omega_+)}\big\langle\gamma_{N_+} f_{D+},
\widetilde \gamma_D g_+\big\rangle_{\mathscr{G}_N(\partial\Omega_+)^*}\\
& \quad\qquad - {}_{\mathscr{G}_N(\partial\Omega_-)^*}\big\langle\widetilde\gamma_D f_-,
\gamma_{N_-} g_{D-}\big\rangle_{\mathscr{G}_N(\partial\Omega_-)} +  {}_{\mathscr{G}_N(\partial\Omega_-)}\big\langle\gamma_{N_-} f_{D-},
\widetilde \gamma_D g_-\big\rangle_{\mathscr{G}_N(\partial\Omega_-)^*};
\end{split}
\end{equation*}
cf.~Proposition~\ref{green} and  Step 1 in the proof of Proposition~\ref{rrprop}. Taking into account 
$\widetilde\gamma_D f_\pm,\widetilde\gamma_D g_\pm\in\mathscr G_\pm^\bot$
and decomposing $\gamma_{N_\pm} f_{D\pm}$ and $\gamma_{N_\pm} g_{D\pm}$ in the form 
\begin{equation*}
\begin{split}
 \gamma_{N_\pm} f_{D\pm}&=\bigl(\gamma_{N_\pm} f_{D\pm}\vert_\cC\bigr)^\sim+ \bigl(\gamma_{N_\pm} f_{D\pm}\vert_{\cC_\pm}\bigr)^\sim,\\
 \gamma_{N_\pm} g_{D\pm}&=\bigl(\gamma_{N_\pm} g_{D\pm}\vert_\cC\bigr)^\sim+ \bigl(\gamma_{N_\pm} g_{D\pm}\vert_{\cC_\pm}\bigr)^\sim,
 \end{split}
 \end{equation*}
where the extensions by $0$ on the right hand side belong to the spaces $\mathscr{G}_N(\partial\Omega_\pm)$ by Lemma~\ref{glem}, we find that  
\begin{equation*}
\begin{split}
 \bigl(Af,g\bigr)_{L^2(\Omega)}-\bigl(f,Ag\bigr)_{L^2(\Omega)}
 & = {}_{\mathscr{G}_{N}(\partial\Omega_+)^*}\big\langle\widetilde\gamma_D f_+,\bigl(\gamma_{N_+} g_{D+}\vert_\cC\bigr)^\sim\big\rangle_{\mathscr{G}_N(\partial\Omega_+)}\\
 &\qquad  - {}_{\mathscr{G}_N(\partial\Omega_+)}\big\langle\bigl(\gamma_{N_+} f_{D+}\vert_\cC\bigr)^\sim,\widetilde\gamma_D g_+\big\rangle_{\mathscr{G}_N(\partial\Omega_+)^*}\\
& \qquad - {}_{\mathscr{G}_N(\partial\Omega_-)^*}\big\langle\widetilde\gamma_D f_-,\bigl(\gamma_{N_-} g_{D-}\vert_\cC\bigr)^\sim\big\rangle_{\mathscr{G}_N(\partial\Omega_-)} \\
 &\qquad  +  {}_{\mathscr{G}_N(\partial\Omega_-)}\big\langle\bigl(\gamma_{N_-} f_{D-}\vert_\cC\bigr)^\sim,
 \widetilde \gamma_D g_-\big\rangle_{\mathscr{G}_N(\partial\Omega_-)^*}.
\end{split}
\end{equation*}
As $f,g\in\dom A$ we also have 
\begin{equation*}
 \widetilde\gamma_D f_+\vert_\cC=\widetilde\gamma_D f_-\vert_\cC \quad\text{and}\quad\widetilde\gamma_D g_+\vert_\cC=\widetilde\gamma_D g_-\vert_\cC
\end{equation*}
and hence the terms on the right hand side simplify to
\begin{equation}\label{rand1}
\begin{split}
 &{}_{\mathscr{G}_{N}(\partial\Omega_+)^*}\big\langle\widetilde\gamma_D f_+,
      \bigl(\gamma_{N_+} g_{D+}\vert_\cC\bigr)^\sim-\bigl(\gamma_{N_-} g_{D-}\vert_\cC\bigr)^\sim\big\rangle_{\mathscr{G}_N(\partial\Omega_+)}\\
&\qquad\quad - {}_{\mathscr{G}_N(\partial\Omega_+)}\big\langle\bigl(\gamma_{N_+} f_{D+}\vert_\cC\bigr)^\sim-\bigl(\gamma_{N_-} f_{D-}\vert_\cC\bigr)^\sim,
      \widetilde\gamma_D g_+\big\rangle_{\mathscr{G}_N(\partial\Omega_+)^*}.
 \end{split}
 \end{equation}
According to Lemma~\ref{rrlem}~(ii) the functions $f_{0\pm},g_{0\pm}\in\ker R^*$ satisfy 
\begin{equation*}
\widetilde\gamma_{N_+} f_{0+}\vert_\cC=\widetilde\gamma_{N_-} f_{0-}\vert_\cC\quad\text{and}\quad
\widetilde\gamma_{N_+} g_{0+}\vert_\cC=\widetilde\gamma_{N_-} g_{0-}\vert_\cC.
\end{equation*}
Thus we have
\begin{equation*}
 \begin{split}
  0=\widetilde\gamma_{N_+} f_+\vert_\cC - \widetilde\gamma_{N_-} f_-\vert_\cC
  &=\widetilde\gamma_{N_+} (f_{D+} + f_{0+})\vert_\cC - \widetilde\gamma_{N_-} (f_{D-} + f_{0-})\vert_\cC\\
  &=\gamma_{N_+} f_{D+} \vert_\cC - \gamma_{N_-} f_{D-} \vert_\cC
 \end{split}
\end{equation*}
and
\begin{equation*}
 \begin{split}
  0=\widetilde\gamma_{N_+} g_+\vert_\cC - \widetilde\gamma_{N_-} g_-\vert_\cC
  &=\widetilde\gamma_{N_+} (g_{D+} + g_{0+})\vert_\cC - \widetilde\gamma_{N_-} (g_{D-} + g_{0-})\vert_\cC\\
  &=\gamma_{N_+} g_{D+} \vert_\cC - \gamma_{N_-} g_{D-} \vert_\cC,
 \end{split}
\end{equation*}
and hence the corresponding entries in \eqref{rand1} vanish, that is, 
\begin{equation*}
\bigl(Af,g\bigr)_{L^2(\Omega)}-\bigl(f,Ag\bigr)_{L^2(\Omega)}=0,\qquad f,g\in\dom A.
\end{equation*}
We have shown that $A\subset A^*$ holds. 

Next we verify that the operator 
\begin{equation*}
 R_0:=R^*\upharpoonright\dom R\,\dot +\,\ker R^*
\end{equation*}
is contained in $A$. In fact, the inclusion $\dom R\subset \dom A$ is obvious and hence
it remains to show that $\ker R^*\subset\dom A$. It is clear from the definition of $\dom R^*$ that any 
function in $f_0=(f_{0+},f_{0-})\in\ker R^*$ satisfies the boundary conditions for functions in $\dom A$, 
with the exception of the condition
$\widetilde\gamma_{N_+}f_{0+}\vert_\cC=\widetilde\gamma_{N_-}f_{0-}\vert_\cC$. But this last condition holds by Lemma~\ref{rrlem}~(i).
Therefore $R_0\subset A$. We claim that $R_0$ is selfadjoint. First of all $R_0$ is symmetric since for $f=f_R+f_0\in\dom R\,\dot + \,\ker R^*$
one has
\begin{equation*}
 (R_0 f,f)_{L^2(\Omega)}=\bigl(R_0(f_R+f_0),f_R+f_0\bigr)_{L^2(\Omega)}=(R f_R,f_R)_{L^2(\Omega)},
\end{equation*}
and $R$ is symmetric. Moreover, by Lemma~\ref{rrlem}~(ii)  $0$ is a point of regular type of $R$, that is,
\begin{equation*}
 \ker R=\{0\}\quad\text{and}\quad\ran R\quad\text{is closed}.
\end{equation*}
This leads to the direct sum decomposition 
\begin{equation*}
\ran (R_0-\mu)=\ran(R-\mu)\, \dot+\, \ker R^*=L^2(\Omega),\qquad\mu\in\cmr,
\end{equation*}
from which
we then conclude that $R_0$ is a selfadjoint operator in $L^2(\Omega)$. 
Summing up we have shown that $A$ is a symmetric operator which contains the selfadjoint operator $R_0$, so that $A=R_0$ is selfadjoint.
\end{proof}

Finally we state a result on the spectral properties of the operator $A$. Our proof is a variant of \cite[Lemma 2.3]{AGMST10}, see also \cite{K47}.

\begin{proposition}\label{mainprop}
 Let $A$ be the selfadjoint operator from Theorem~\ref{mainthm}. Then $0$ is an isolated eigenvalue of infinite multiplicity and the corresponding
 eigenspace is given by $\ker R^*$. 
The spectrum in $\dR\backslash\{0\}$ is discrete 
(i.e.\ composed of isolated eigenvalues of finite multiplicities)
and accumulates to $+\infty$ and $-\infty$. 
\end{proposition}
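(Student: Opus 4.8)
The plan is to leverage the structural decomposition $A = R^* \upharpoonright (\dom R \,\dot+\, \ker R^*)$ established in Theorem~\ref{mainthm}, together with the properties of $R$ from Lemma~\ref{rrlem}, reducing everything to a statement about a semibounded-from-neither-side symmetric operator whose Friedrichs-type extensions away from $0$ are well-controlled. Concretely, $A$ acts as $R$ on $\dom R$ and as $0$ on $\ker R^*$, and these two pieces are $L^2(\Omega)$-orthogonal in the sense that for $f = f_R + f_0$ one has $(Af,f) = (Rf_R,f_R)$; moreover $\ran R \perp \ker R^*$. So spectrally $A$ decomposes (at least in the sense of a direct sum of the graph/resolvent) as the trivial zero operator on $\ker R^*$ direct-summed with an operator on $\cran R = (\ker R^*)^\perp$ that is the inverse-type object associated with $R$.

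First I would make the orthogonal decomposition $L^2(\Omega) = \ker R^* \oplus \cran R$ rigorous and observe that $A$ leaves both summands invariant: $A\upharpoonright \ker R^* = 0$ is immediate, and $A \upharpoonright (\dom A \cap \cran R) = R$, since $\dom R \subset \cran R$ by Lemma~\ref{rrlem}~(ii) (indeed $R$ is invertible with closed range, so $\cran R = \ran R$ and $\dom R = R^{-1}(\ran R)$; one checks $\dom R \subset (\ker R^*)^\perp$ because $R$ is symmetric and $\ker R^*$ is its defect-type complement). Hence $0$ is an eigenvalue of $A$ with eigenspace exactly $\ker R^*$ — exactly because $Af = 0$ with $f = f_R + f_0$ forces $Rf_R = 0$, hence $f_R = 0$ by Lemma~\ref{rrlem}~(ii), so $f = f_0 \in \ker R^*$ — and this eigenspace is infinite-dimensional by Lemma~\ref{rrlem}~(i). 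Next, since $R$ has closed range and trivial kernel, the operator $R^{-1}: \ran R \to \dom R$ is a bounded selfadjoint operator on the Hilbert space $\cran R$ (it is symmetric and everywhere defined on the closed subspace $\ran R = \cran R$, hence selfadjoint and bounded by the closed graph theorem). The spectrum of $A$ in $\dR \setminus \{0\}$ is then $\{\lambda^{-1} : \lambda \in \sigma(R^{-1}) \setminus \{0\}\}$, so it suffices to show $R^{-1}$ is compact: then $\sigma(R^{-1})\setminus\{0\}$ consists of eigenvalues of finite multiplicity accumulating only at $0$, which translates to discrete spectrum of $A$ accumulating only at $\pm\infty$, and $0$ being isolated in $\sigma(A)$.

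For compactness of $R^{-1}$ I would compare with the Dirichlet operator: $R \subset A_{D+}\oplus A_{D-}$, and the latter has compact resolvent since $\Omega_\pm$ are bounded. More precisely, since $R$ and $A_{D+}\oplus A_{D-}$ agree on $\dom R$ and $\ran R$ is a closed subspace, the restriction of $(A_{D+}\oplus A_{D-})^{-1}$ — which is compact on $L^2(\Omega)$ — to $\ran R$ equals $R^{-1}$ on $\ran R$ (both send $g \in \ran R$ to the unique $f \in \dom R$ with $(A_{D+}\oplus A_{D-})f = g$); restriction of a compact operator to a closed invariant-image subspace is compact, so $R^{-1}$ is compact. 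With compactness in hand, the eigenvalue structure of $A$ in $\dR\setminus\{0\}$ follows at once from the spectral theorem for compact selfadjoint operators, and the accumulation at both $+\infty$ and $-\infty$ follows because $A$ is bounded neither from above nor from below — one exhibits, e.g., the separation-of-variables eigenfunctions $\sin(m\pi y)$ times appropriate $x$-profiles matching the interface conditions, giving sequences of eigenvalues tending to $+\infty$ (supported essentially in $\Omega_+$) and to $-\infty$ (in $\Omega_-$); alternatively one invokes that $A_{D+} \oplus A_{D-}$ has eigenvalues going to $+\infty$ and $A_{D-}$ contributes eigenvalues going to $-\infty$, together with a min-max comparison through $R$.

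I expect the main obstacle to be the clean identification $A\upharpoonright(\dom A \cap \cran R) = R$ with the corresponding resolvent compatibility — i.e. making fully precise that the reduction of the selfadjoint $A$ to the reducing subspace $\cran R$ is unitarily equivalent to $R$ acting as an (unbounded) selfadjoint operator there with bounded inverse $R^{-1}$ — since one must verify that $\cran R$ genuinely reduces $A$ (not merely that $A$ maps $\dom A \cap \cran R$ into $\cran R$) and that no part of the spectrum is lost in passing between $R$, $R^{-1}$, and $A$. Everything else is bookkeeping built on Lemma~\ref{rrlem} and the compact resolvent of the Dirichlet Laplacians on the convex pieces $\Omega_\pm$.
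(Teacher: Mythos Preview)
Your overall strategy coincides with the paper's: decompose $L^2(\Omega)=\ker R^*\oplus\cran R$, observe that~$A$ acts as~$0$ on the first summand, and reduce everything to compactness of the inverse of the part of~$A$ on the second summand, obtained by restricting $(A_{D+}\oplus A_{D-})^{-1}$.

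There is, however, a genuine gap in the execution: the inclusion $\dom R\subset(\ker R^*)^\perp$ that you assert (and use to declare $R^{-1}$ a bounded selfadjoint operator \emph{on} $\cran R$, and to identify $A\upharpoonright(\dom A\cap\cran R)$ with $R$) is \emph{false}. Symmetry of~$R$ gives $\ran R\perp\ker R^*$, not $\dom R\perp\ker R^*$. Concretely, any $f=(f_+,0)^\top$ with $f_+\in H_0^2(\Omega_+)$ nonnegative and not identically zero lies in $\dom R$ (the interface Neumann condition is trivially met), but $(f,f_{0,1})_{L^2(\Omega)}=\int_{\Omega_+} f_+(x,y)\sinh(\pi(1-x))\sin(\pi y)\,dx\,dy>0$ with $f_{0,1}\in\ker R^*$ as in~\eqref{efs}. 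Consequently $R^{-1}=A_D^{-1}\!\upharpoonright\ran R$ does \emph{not} map $\ran R$ into itself, so it is not an operator on~$\cran R$, and $A\upharpoonright(\dom A\cap\cran R)$ is not~$R$ but rather the map $Pf_R\mapsto Rf_R$, where~$P$ is the orthogonal projection onto $\cH:=\cran R$.

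The paper circumvents this exactly where you flag the obstacle: it never identifies the reduced operator $A':=A\upharpoonright\cH$ with~$R$, but instead derives directly that $A'^{-1}=P\,A_D^{-1}\,\iota$ (the projection~$P$ is essential). Compactness of~$A'^{-1}$ then follows trivially since $A_D^{-1}$ is compact and $P,\iota$ are bounded. For the accumulation at $\pm\infty$ the paper uses the clean identity $(A'^{-1}g,g)_\cH=(A_D^{-1}\iota g,\iota g)_{L^2(\Omega)}$ together with the observation that $\ran(S_+\oplus S_-)\subset\ran R=\cH$ forces $\cH\cap L^2(\Omega_\pm)$ to be infinite dimensional; this is crisper than either of your suggested routes, though those would also work.
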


\begin{proof}
It is clear that the eigenspace $\ker A=\ker R^*$ is an infinite dimensional closed subspace of $L^2(\Omega)$. 
Moreover, 
\begin{equation}
 \cH:=\ran A=(\ker A)^\bot=(\ker R^*)^\bot =\ran R
\end{equation}
is closed according to Lemma~\ref{rrlem}~(ii).
In the following we denote the orthogonal
projection onto the subspace $\cH$ by $P$ and the embedding of $\cH$ into $L^2(\Omega)$ is denoted by $\iota$.  
For the restriction of $A$ to $\cH$ we write $A^\prime$. Note that $A^\prime$ is a bijective selfadjoint operator in 
the Hilbert space $\cH$, so that $0\not\in\sigma(A^\prime)$. With respect to the decomposition $L^2(\Omega)=\cH\oplus\cH^\bot$ we have $A=A^\prime\oplus 0$
and hence 
\begin{equation}\label{af}
 Af=\iota A^\prime Pf,\qquad f\in\dom A.
\end{equation}
It will also be used below that the orthogonal sum $A_D=A_{D+}\oplus A_{D-}$ 
of the Dirichlet operators $A_{D\pm}$ is a selfadjoint operator in $L^2(\Omega)$ and that $0\not\in\sigma(A_D)$.

Let now $f=f_R+f_0\in\dom A$, where $f_R\in\dom R$ and $f_0\in\ker A$. As $R\subset A_D$ and $R\subset A$ we have 
\begin{equation*}
f=f_R+f_0=A_D^{-1} Rf_R+f_0 =A_D^{-1} A f_R+f_0 = A_D^{-1} A  f +f_0 
\end{equation*}
and hence
\begin{equation*}
 Pf=P\bigl(A_D^{-1} A f +f_0 \bigr)=PA_D^{-1} A f=PA_D^{-1} \iota A^\prime P f,
\end{equation*}
where we have used \eqref{af} in the last equality.
This leads to 
$$A^{\prime -1}( A^\prime P f)=Pf= PA_D^{-1} \iota (A^\prime P f)$$
and as $0\not\in\sigma(A^\prime)$ we conclude 
\begin{equation*}
 A^{\prime -1}=PA_D^{-1} \iota.
\end{equation*}
Since $A_D^{-1}$ is a compact operator in $L^2(\Omega)$ it follows that $A^{\prime -1}$ is a compact operator in $\cH$. Moreover,
for $g\in\cH$ we have
\begin{equation}\label{aaaform}
 (A^{\prime -1} g,g)_\cH=(PA_D^{-1} \iota g,g)_\cH= (A_D^{-1}\iota g,\iota g)_{L^2(\Omega)}.
\end{equation}
Since $S_+\oplus S_-\subset R$ we conclude for all $f_\pm\in\dom S_\pm=H^2_0(\Omega_\pm)$
$$(S_+ f_+, 0)^\top\in\ran R=\cH\quad\text{and}\quad (0,S_- f_-)^\top\in\ran R=\cH.$$ 
It follows that
the spaces $\cH\cap L^2(\Omega_\pm)$ are both infinite dimensional. It is clear that the form on the right hand side of \eqref{aaaform}
is positive (negative) for functions in $\cH\cap L^2(\Omega_+)$ (resp.\ $\cH\cap L^2(\Omega_-))$. This implies that the positive and negative
spectra of $A^{\prime -1}$ are both infinite. Now it follows from the compactness that the spectrum of $A^\prime$ (and hence of $A$) 
in $\dR\backslash\{0\}$ is discrete and accumulates to $+\infty$ and $-\infty$.
\end{proof}

\section{Quantitive spectral properties of the selfadjoint operator $A$}\label{Sec.spec}

According to Proposition~\ref{mainprop} the spectrum of the selfadjoint operator $A$ consists of eigenvalues which accumulate to
$+\infty$ and $-\infty$.
The eigenvalue $0$ is of infinite multiplicity, the multiplicities of the nonzero eigenvalues are finite. In the next proposition we identify
the eigenvalues of $A$ with the roots of an elementary algebraic equation and we specify the eigenfunctions of $A$. 
\begin{proposition}\label{Prop.spec}
Let $A$ be the selfadjoint operator from Theorem~\ref{mainthm}. 
Then the following hold.
\begin{itemize}
\item[\textrm{(i)}]
The spectrum of~$A$ is symmetric with respect to~$0$.
\item[\textrm{(ii)}]
We have
$$
  \sigma(A) = \bigcup_{n=1}^\infty \bigcup_{m=-\infty}^\infty
  \{\lambda_{n,m}\}
  \,,
$$
where $\{\lambda_{n,m}\}_{m\in\mathbb{Z}}$ for each fixed~$n \in \mathbb{N}$
is an increasing sequence of simple roots of the algebraic equation
\begin{equation}\label{algebraic}
  \frac{\tanh\sqrt{\lambda+(n\pi)^2}}{\sqrt{\lambda+(n\pi)^2}}
  = \frac{\tan\sqrt{\lambda-(n\pi)^2}}{\sqrt{\lambda-(n\pi)^2}}
\end{equation}
for $\lambda \not= \pm (n\pi)^2$.
We arrange the sequence in such a way that $\lambda_{n,0}=0$
(zero is a solution of~\eqref{algebraic} for any $n \in \mathbb{N}$).
\item[\textrm{(iii)}]
Given any $n \in \mathbb{N}$, \eqref{algebraic} has no root in 
$(-(n\pi)^2,0)\cup(0,(n\pi)^2)$. 
In particular, $[-\pi^2,0) \cup (0,\pi^2] \not \in \sigma(A)$.
\item[\textrm{(iv)}]
The eigenfunction of~$A$ corresponding to $\lambda_{n,m}$ 
is given by $f_{n,m}(x,y) = \psi_{n,m}(x) \chi_n(y)$, 
where $\chi_n(y) = \sqrt{2} \sin(n \pi y)$ and
\begin{equation}\label{efs.1D}
  \psi_{n,m}(x) = 
  \begin{cases}
    N_{n,m} \, \sinh\sqrt{\lambda_{n,m}+(n\pi)^2} \,
    \sin\left(\sqrt{\lambda_{n,m}-(n\pi)^2} \, (1-x)\right) ,
    & x>0 \,,
    \\
    N_{n,m} \, \sin\sqrt{\lambda_{n,m}-(n\pi)^2} \,
    \sinh\left(\sqrt{\lambda_{n,m}+(n\pi)^2} \, (1+x)\right) ,
    & x<0 \,,
  \end{cases}
\end{equation}
with any $N_{n,m} \in \mathbb{C}\setminus\{0\}$.
With the normalization constants~$N_{n,m}$ satisfying
\begin{multline*}
  |N_{n,m}|^{-2} = 
  \mbox{$\sinh^2\sqrt{\lambda_{n,m}+(n\pi)^2}$}
  \left[\frac{1}{2}-\frac{\sin\left(2\sqrt{\lambda_{n,m}-(n\pi)^2}\right)}
  {4\sqrt{\lambda_{n,m}-(n\pi)^2}}\right]
  \\
  + \mbox{$\sin^2\sqrt{\lambda_{n,m}-(n\pi)^2}$}
  \left[-\frac{1}2{}+\frac{\sinh\left(2\sqrt{\lambda_{n,m}+(n\pi)^2}\right)}
  {4\sqrt{\lambda_{n,m}+(n\pi)^2}}\right]
  ,
\end{multline*}
the functions $f_{n,m}$ ($n \in \mathbb{N}$, $m \in \mathbb{Z}$) 
form a complete orthonormal set in $L^2(\Omega)$. 
\end{itemize}
\end{proposition}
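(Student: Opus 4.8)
The plan is to proceed by separation of variables, exploiting the product structure $\Omega = (-1,1)\times(0,1)$ together with the Dirichlet conditions on the horizontal edges $y=0,1$. First I would expand any $f \in \dom A$ in the orthonormal basis $\chi_n(y)=\sqrt2\sin(n\pi y)$ of $L^2(0,1)$, writing $f(x,y)=\sum_{n=1}^\infty \psi_n(x)\chi_n(y)$ with $\psi_n \in L^2(-1,1)$. Because the differential expression $\cA$ acts as $-\partial_x^2 + n^2\pi^2$ on $\Omega_+$ and as $\partial_x^2 - n^2\pi^2$ on $\Omega_-$ in each Fourier mode, and because the interface and boundary conditions decouple mode-by-mode, the eigenvalue problem $Af=\lambda f$ reduces to a family of one-dimensional problems on $(-1,1)$: find $\psi_n$ with $\psi_n(\pm1)=0$, satisfying $-\psi_n''+ (n\pi)^2\psi_n = \lambda\psi_n$ on $(0,1)$, $\psi_n'' - (n\pi)^2\psi_n = \lambda\psi_n$ on $(-1,0)$, and the matching conditions $\psi_n(0^+)=\psi_n(0^-)$, $\psi_n'(0^+)=\psi_n'(0^-)$ at the interface $x=0$. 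Here one must be careful: the abstract conditions $\widetilde\gamma_D f_\pm\in\mathscr G_\pm^\bot$ and the Dirichlet/Neumann matchings at $\cC$ must be translated into these pointwise one-dimensional statements, and this is where the regularity analysis of Sections~\ref{Sec.Green}--\ref{Sec.ss} is invoked. The eigenfunctions in~\eqref{efs} together with Proposition~\ref{mainprop} guarantee that such a Fourier decomposition is legitimate: $\ker A = \ker R^*$ is spanned by the modes with $\lambda=0$, and on $\cH=\ran A$ the operator has compact resolvent with a complete system of eigenfunctions.

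The core computation is the solution of the one-dimensional boundary value problem. For $\lambda\neq\pm(n\pi)^2$ the general decaying-at-the-endpoint solution is $\psi_n(x)=a\sin(\sqrt{\lambda-(n\pi)^2}\,(1-x))$ for $x>0$ (this already satisfies $\psi_n(1)=0$) and $\psi_n(x)=b\sinh(\sqrt{\lambda+(n\pi)^2}\,(1+x))$ for $x<0$ (satisfying $\psi_n(-1)=0$); note these formulas persist under the usual analytic continuations $\sin(iz)=i\sinh z$ regardless of the sign of $\lambda\mp(n\pi)^2$. Imposing continuity of $\psi_n$ and $\psi_n'$ at $x=0$ gives a $2\times2$ homogeneous linear system in $(a,b)$; a nontrivial solution exists exactly when its determinant vanishes, and a short calculation shows this determinant condition is precisely~\eqref{algebraic}. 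This simultaneously yields~(ii) — the spectrum is the union over $n$ of the roots — and~(iv), since $(a,b)$ is then determined up to scaling, producing~\eqref{efs.1D}; the normalization constant $N_{n,m}$ comes from computing $\int_{-1}^1|\psi_{n,m}|^2\,dx$ by elementary integration of $\sin^2$ and $\sinh^2$, giving the stated formula. Completeness and orthonormality of $\{f_{n,m}\}$ follows by combining completeness of $\{\chi_n\}$ in $L^2(0,1)$ with completeness of $\{\psi_{n,m}\}_m$ in $L^2(-1,1)$ for each fixed $n$; the latter is itself a consequence of the fact that $A$ restricted to the $n$-th mode is selfadjoint with compact resolvent on $\cH$ plus the explicit description of $\ker R^*$.

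For~(i) and~(iii) I would read off structural features of~\eqref{algebraic}. Symmetry of the spectrum with respect to~$0$: the map $\lambda\mapsto-\lambda$ combined with $x\mapsto -x$ interchanges $\Omega_+$ and $\Omega_-$ and turns $\cA$ into $-\cA$, so if $(f,\lambda)$ is an eigenpair then $(f(-\,\cdot\,,\cdot),-\lambda)$ is one as well; equivalently, one checks directly that~\eqref{algebraic} is invariant under $\lambda\mapsto-\lambda$ after swapping the roles of $\sqrt{\lambda+(n\pi)^2}$ and $\sqrt{\lambda-(n\pi)^2}$ — this uses the parity of $\tanh(z)/z$ and $\tan(z)/z$ as even functions, so the continuation to negative arguments of $\lambda\mp(n\pi)^2$ is consistent. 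For~(iii), on the interval $0<\lambda<(n\pi)^2$ both radicands' signs are such that the left side of~\eqref{algebraic} equals $\tanh(s)/s$ with $s=\sqrt{\lambda+(n\pi)^2}>0$, which lies strictly in $(0,1)$, while the right side becomes $\tanh(t)/t$ with $t=\sqrt{(n\pi)^2-\lambda}>0$ after the continuation $\tan(i t)/(it)=\tanh(t)/t$, also in $(0,1)$; comparing $s>t$ and using strict monotonicity of $u\mapsto \tanh(u)/u$ shows the two sides can never be equal on this interval, and similarly (by symmetry) on $(-(n\pi)^2,0)$. Taking $n=1$ gives $[-\pi^2,0)\cup(0,\pi^2]\cap\sigma(A)=\emptyset$. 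The main obstacle I anticipate is not any single computation but rather the justification that the \emph{abstract} domain conditions of Theorem~\ref{mainthm} are equivalent, on smooth Fourier modes, to the classical pointwise conditions $\psi_n(0^+)=\psi_n(0^-)$ and $\psi_n'(0^+)=\psi_n'(0^-)$ — i.e. that no spurious eigenfunctions are lost or gained when passing between the two descriptions; this requires invoking Lemma~\ref{glem}, Lemma~\ref{rrlem}, and the surjectivity/kernel properties of $\widetilde\gamma_D,\widetilde\gamma_{N_\pm}$ carefully, and checking that the zero-eigenvalue modes~\eqref{efs} exhaust exactly the ``missing'' regularity at $\cC$.
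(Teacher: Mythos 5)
Your route---expansion in the transverse Dirichlet basis $\chi_n$, reduction to a one-dimensional transmission problem at $x=0$, and deduction of (i) and (iii) from the structure of \eqref{algebraic}---is the same as the paper's, and your monotonicity argument for (iii) (comparing $\tanh(s)/s$ and $\tanh(t)/t$ with $s=\sqrt{\lambda+(n\pi)^2}>t=\sqrt{(n\pi)^2-\lambda}$, both functions of the form $\tanh(u)/u$ being strictly decreasing) is a correct and slightly more direct variant of the paper's computation with the derivative of the difference of reciprocals. There are, however, two genuine problems. First, you impose the interface condition $\psi_n'(0^+)=\psi_n'(0^-)$, i.e.\ continuity of the ordinary derivative. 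That is the transmission condition for the usual Laplacian. The condition built into $\dom A$, namely $\partial_{{\bf n}_+}f_+\vert_\cC=\partial_{{\bf n}_-}f_-\vert_\cC$ with the two normals pointing in \emph{opposite} directions, translates into $\psi_{n+}'(0)=-\psi_{n-}'(0)$ (equivalently, continuity of $\sgn\cdot\partial_x f$ across $\cC$), and this sign is precisely the peculiarity of the whole problem. With your condition the vanishing of the $2\times 2$ determinant yields $\tanh\bigl(\sqrt{\lambda+(n\pi)^2}\bigr)/\sqrt{\lambda+(n\pi)^2}=-\tan\bigl(\sqrt{\lambda-(n\pi)^2}\bigr)/\sqrt{\lambda-(n\pi)^2}$, which is not \eqref{algebraic}; in particular $\lambda=0$ would then fail to be a solution, contradicting the infinite-dimensional kernel. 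So the ``short calculation'' you invoke does not produce the stated equation from the stated matching conditions.

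Second, part (ii) asserts that the $\lambda_{n,m}$ are \emph{simple} roots of \eqref{algebraic}, and your proposal never addresses simplicity. In the paper this requires a real argument: writing $F$ for the difference of the two sides of \eqref{algebraic}, one computes $F'$ at a root, uses the elementary bound $\tanh x<x$ together with the fact from (iii) that every positive root exceeds $(n\pi)^2$ to conclude $F'(\lambda_{n,m})<0$, and evaluates $F'(0)$ separately for the root $\lambda_{n,0}=0$. This omission is not cosmetic: the nonvanishing of $F'$ at the roots is exactly the hypothesis that drives the implicit function theorem in the convergence theorem of Section~\ref{Sec.spec}.
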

\begin{proof}
The eigenvalues~$\lambda$ and the corresponding eigenfunctions~$f$ of~$A$  
can be obtained as nontrivial solutions of the differential equations
$\mp\Delta f_\pm = \lambda f_\pm$ in $\Omega_\pm$,
subject to the boundary and interface conditions determined in~\eqref{opa}.
From this boundary transmission problem, 
it is immediately seen that if~$\lambda$ is an eigenvalue of~$A$
(with eigenfunction~$f(x,y)$), 
then also~$-\lambda$ is an eigenvalue of~$A$
(with eigenfunction~$f(-x,y)$).
This establishes~(i).

The other properties (ii)--(iv) 
are obtained by a separation of variables.
Decomposing any eigenfunction $f \in L^2(\Omega)$ of~$A$
into the transverse orthonormal Dirichlet basis $\{\chi_n\}_{n=1}^\infty$,
i.e.,
$$
  f(x,y) = \sum_{n=1}^\infty \psi_n(x) \chi_n(y) \,,
  \qquad \chi_n(y) = \sqrt{2} \sin(n \pi y)\,,
$$
we easily obtain from the boundary transmission problem in~$\Omega$ 
that the function
$
  \psi_n = (\psi_{n+},\psi_{n-})^\top 
  \in L^2((0,1)) \times L^2((-1,0))
$ 
for each fixed $n \in \mathbb{N}$
is a nontrivial solution of the following problem
\begin{equation}\label{bv}
\begin{split}
  -\psi_{n+}^{\prime\prime} & = (\lambda-(n\pi)^2) \psi_{n+}\quad \mbox{in}\quad (0,1) \,,\\
   \psi_{n-}'' &= (\lambda+(n\pi)^2) \psi_{n-} \quad \mbox{in}\quad (-1,0) \,,
\end{split}
\end{equation}
subject to the boundary and interface conditions
\begin{equation}\label{bv2}
 \psi_{n+}(1)=\psi_{n-}(-1)=0,\quad 
\psi_{n+}(0)=\psi_{n-}(0),\quad\text{and}\quad \psi_{n+}^\prime(0)
=-\psi_{n-}^\prime(0) .
\end{equation}
Solving the differential equations in~\eqref{bv} in terms of exponentials 
and subjecting the latter to the boundary and interface conditions \eqref{bv2},
we find that any nontrivial solution~$\psi_n$ is of the form~\eqref{efs.1D} 
with the constrain that the eigenvalue~$\lambda$ solves~\eqref{algebraic}.
There is an infinite number of such solutions 
because~\eqref{algebraic} always contains an oscillatory tangent function
for large values of~$\lambda$. 
For each fixed $n \in \mathbb{N}$,
we arrange the roots of~\eqref{algebraic} in an increasing sequence
$\{\lambda_{n,m}\}_{m\in\mathbb{Z}}$ such that $\lambda_{n,0}=0$. 
Notice that $\lambda = \pm (n\pi)^2$ are not admissible solutions
of~\eqref{bv} for any $n \in \mathbb{N}$.
This is in fact consistent with~\eqref{algebraic}, 
because the limit $\lambda \to \pm (n\pi)^2$ 
casts~\eqref{algebraic} into
$
  \tanh\sqrt{2(n\pi^2)} = \sqrt{2(n\pi^2)}
$
which is never satisfied for nonzero~$n$.
We have thus proved~(ii), except for the simplicity 
of the roots of~\eqref{algebraic},
which will be established at the end of this proof.
As for~(iv), it only remains to recall that eigenfunctions 
of a selfadjoint operator with pure point spectrum 
form a complete orthonormal set when normalized properly
($N_{n,m}$ is chosen in such a way that all~$\psi_{n,m}$
have norm~$1$ in $L^2((-1,1))$ and $\chi_n$ are already
normalized to~$1$ in $L^2((0,1))$).  

Now we turn to a proof of~(iii). 
Recall that we already know 
that no eigenvalue can be equal to $\pm (n\pi)^2$, with $n \in \mathbb{N}$. 
To show that~\eqref{algebraic} has no root in $(0,(n\pi)^2)$,
it is enough to show that the function 
$$
  G(\lambda) = 
  \frac{\sqrt{\lambda+(n\pi)^2}}{\tanh\sqrt{\lambda+(n\pi)^2}}
  - \frac{\sqrt{(n\pi)^2-\lambda}}{\tanh\sqrt{(n\pi)^2-\lambda}}
$$
does not vanish in $(0,(n\pi)^2)$.
This follows from $G(0)=0$ and 
\begin{multline*}
  G'(\lambda) = \frac{1}{4} \Bigg[
  \frac{\sinh\left(2\sqrt{\lambda+(n\pi)^2}\right)-2\sqrt{\lambda+(n\pi)^2}}
  {\sqrt{\lambda+(n\pi)^2} \, \sinh^2\sqrt{\lambda+(n\pi)^2}}
  \\
  + \frac{\sinh\left(2\sqrt{(n\pi)^2-\lambda}\right)-2\sqrt{(n\pi)^2-\lambda}}
  {\sqrt{(n\pi)^2-\lambda} \, \sinh^2\sqrt{(n\pi)^2-\lambda}}
  \Bigg] > 0
  ,
\end{multline*}
for $\lambda \in (0,(n\pi)^2)$, where the crucial inequality 
is due to
the elementary bound $\sinh(x) > x$ valid for all $x>0$.
Since~\eqref{algebraic} is symmetric 
with respect to the change $\lambda \mapsto -\lambda$, 
the claim on the absence of roots extends
to the symmetric set $(-(n\pi)^2,0)\cup(0,(n\pi)^2)$.

It remains to prove the simplicity of roots stated in~(ii).
By symmetry of~\eqref{algebraic}, it is again enough to show it
for non-negative roots~$\lambda_{n,m}$ only.
Defining 
\begin{equation}\label{F}
  F(\lambda) 
  = \frac{\tanh\sqrt{\lambda+(n\pi)^2}}{\sqrt{\lambda+(n\pi)^2}}
  - \frac{\tan\sqrt{\lambda-(n\pi)^2}}{\sqrt{\lambda-(n\pi)^2}}
  \,,
\end{equation}
we have that
$\lambda_{n,m}$ is a root of~\eqref{algebraic} if, and only if,
$F(\lambda_{n,m})=0$.
Using this identity, 
it is straightforward to cast the derivative of~$F$ at $\lambda_{n,m}$
into the form
$$
  F'(\lambda_{n,m}) = 
  - \frac{\tanh^2\sqrt{\lambda_{n,m}+(n\pi)^2}}{\lambda_{n,m}+(n\pi)^2}
  \\
  + \frac{(n\pi)^2}{\lambda_{n,m}^2-(n\pi)^4}
  \left[
  \frac{\tanh\sqrt{\lambda_{n,m}+(n\pi)^2}}{\sqrt{\lambda_{n,m}+(n\pi)^2}}
  - 1
  \right]
  .
$$
If $\lambda_{n,m} > 0$, then we know by~(iii) 
that necessarily $\lambda_{n,m} > (n\pi)^2$.
Using the elementary bound $\tanh(x) < x$ for all $x>0$,
we thus obtain 
$$
  F'(\lambda_{n,m}) 
  < - \frac{\tanh^2\sqrt{\lambda_{n,m}+(n\pi)^2}}{\lambda_{n,m}+(n\pi)^2}
  < 0
  \,.
$$
On the other hand, 
employing standard algebraic expressions for hyperbolic functions, 
it is easy to check that the formula for $F'(\lambda_{n,m})$ above
reduces for $\lambda_{n,0}=0$ to
$$
  F'(0) = \frac{2n\pi-\sinh(2n\pi)}{2 (n\pi)^3 \cosh^2(n\pi)} < 0
  \,,
$$
where the inequality follows by the elementary bound used above
in the proof of~(iii).
Summing up, $F'(\lambda)\not=0$ whenever $F(\lambda)=0$,
which proves the simplicity of the roots of~\eqref{algebraic}
and completes the proof of the proposition.
\end{proof}

We remark that the simplicity of roots of~\eqref{algebraic}
stated in point~(ii) of the above proposition
does not mean that the eigenvalues of~$A$ simple.
In fact, we already know from Proposition~\ref{mainprop}
that~$0$ is an eigenvalue of infinite multiplicity.

In order to establish the convergence results 
announced in the introduction in a unified way,
we consider now a more general situation of the differential expression
\begin{equation}\label{A.complex.bis}
 \cT_\delta f=-\text{div}\, (a_\delta\,\nabla f),
  \qquad \quad 
  a_\delta(x,y)=
  \begin{cases} 1, & 
  (x,y)\in\Omega_+, \\ 
  \displaystyle
  -\frac{1}{1+\delta}, & (x,y)\in\Omega_-, 
 \end{cases}
\end{equation}
where~$\delta$ is an arbitrary complex number with $|\delta| < 1$.
We also introduce an associated operator 
\begin{equation}\label{A.complex.bis.op}
  T_\delta f = \cT_\delta f, 
  \qquad
  \dom T_\delta = \bigl\{f \in H_0^1(\Omega) : \cT_\delta f \in L^2(\Omega)\bigr\}.
\end{equation}
Clearly, by choosing~$\delta$ appropriately,
the eigenvalue problems 
for the selfadjoint operator~$A_\varepsilon$ from~\eqref{A_eps.op} 
and the (up to a rotation) $m$-sectorial operator~$B_\eta$ from~\eqref{A.complex.op}
can be cast into the form of the eigenvalue problem for~$T_\delta$. 
The latter reads
\begin{equation}\label{bv.general}
\begin{split}
  -\Delta f_+ &= \lambda f_+\quad\qquad\quad \mbox{in}\quad \Omega_+ \,,\\
  \Delta f_- &= (1+\delta)\lambda f_- \quad\mbox{in}\quad \Omega_- \,,
\end{split}
\end{equation}
 where, in addition, 
$f=(f_+,f_-)^{\txtD{\top}}\in \dom T_\delta\subset H_0^1(\Omega)$ 
satisfies the interface  condition 
\begin{equation}
  (1+\delta)\partial_{{\bf n}_+} f_+\vert_\cC 
  = \partial_{{\bf n}_-} f_-\vert_\cC \,.  
\end{equation}
\begin{proposition}\label{Prop.spec.general}
Let $T_\delta$ be the operator introduced in~\eqref{A.complex.bis.op}. 
There exists an absolute constant $c>0$ 
such that for $|\delta| \leq c$ the following hold.
\begin{itemize}
\item[\textrm{(i)}]
We have
$$
  \sigma_\mathrm{p}(T_\delta) = 
  \bigcup_{n=1}^\infty \bigcup_{m=-\infty}^\infty
  \{\lambda_{n,m}^\delta\}
  \,,
$$
where $\{\lambda_{n,m}^\delta\}_{m\in\mathbb{Z}}$ 
for each fixed~$n \in \mathbb{N}$
is a sequence of roots of the algebraic equation
\begin{equation}\label{algebraic.general}
  (1+\delta) \,
  \frac{\tanh\sqrt{(1+\delta)\lambda+(n\pi)^2}}
  {\sqrt{(1+\delta)\lambda+(n\pi)^2}}
  = \frac{\tan\sqrt{\lambda-(n\pi)^2}}{\sqrt{\lambda-(n\pi)^2}}
\end{equation}
for $\lambda \not= (n\pi)^2$ and $\lambda \not= -(n\pi)^2/(1+\delta)$.
\item[\textrm{(ii)}]
The eigenfunction of~$T_\delta$ corresponding to~$\lambda_{n,m}^\delta$ 
is given by $f_{n,m}^\delta(x,y) = \psi_{n,m}^\delta(x) \chi_n(y)$, 
where $\chi_n(y) = \sqrt{2} \sin(n \pi y)$ and
\begin{equation}\label{efs.1D.general}
  \psi_{n,m}^\delta(x) = 
  \begin{cases}
    N_{n,m}^\delta \, \sinh\sqrt{(1+\delta)\lambda_{n,m}^\delta+(n\pi)^2} \,
    \sin\left(\sqrt{\lambda_{n,m}^\delta-(n\pi)^2} \, (1-x)\right) ,
    & x>0 ,
    \\
    N_{n,m}^\delta \, \sin\sqrt{\lambda_{n,m}^\delta-(n\pi)^2} \,
    \sinh\left(\sqrt{(1+\delta)\lambda_{n,m}^\delta+(n\pi)^2} \, (1+x)\right) ,
    & x<0 ,
  \end{cases}
\end{equation}
with any $N_{n,m}^\delta \in \mathbb{C}\setminus\{0\}$.
With the normalization constants~$N_{n,m}^\delta$ satisfying

\begin{eqnarray*}
  \lefteqn{|N_{n,m}^\delta|^{-2} = 
  \mbox{$\left|\sinh\left(\sqrt{(1+\delta)\lambda_{n,m}^\delta+(n\pi)^2}\right)\right|^2$}
  }
  \\
  && \quad \times
  \left[\frac{\sinh\left(2\IM\sqrt{\lambda_{n,m}^\delta-(n\pi)^2}\right)}
  {4\IM\sqrt{\lambda_{n,m}^\delta-(n\pi)^2}}
  -\frac{\sin\left(2\RE\sqrt{\lambda_{n,m}^\delta-(n\pi)^2}\right)}
  {4\RE\sqrt{\lambda_{n,m}^\delta-(n\pi)^2}}\right]
  \\
  && + \mbox{$\left|\sin\left(\sqrt{\lambda_{n,m}^\delta-(n\pi)^2}\right)\right|^2$}
  \\
  && \quad \times
  \left[-\frac{\sin\left(2\IM\sqrt{(1+\delta)\lambda_{n,m}^\delta+(n\pi)^2}\right)}
  {4\IM\sqrt{(1+\delta)\lambda_{n,m}^\delta+(n\pi)^2}}
  +\frac{\sinh\left(2\RE\sqrt{(1+\delta)\lambda_{n,m}^\delta+(n\pi)^2}\right)}
  {4\RE\sqrt{(1+\delta)\lambda_{n,m}^\delta+(n\pi)^2}}\right]
  ,
\end{eqnarray*}

the functions $f_{n,m}^\delta$ ($n \in \mathbb{N}$, $m \in \mathbb{Z}$) 
are normalized to~$1$ in $L^2(\Omega)$. 
\end{itemize}
\end{proposition}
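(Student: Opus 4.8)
The plan is to repeat the separation-of-variables argument from the proof of Proposition~\ref{Prop.spec}, now with the modified coefficient $a_\delta$ and hence the modified transmission condition $(1+\delta)\partial_{{\bf n}_+}f_+\vert_\cC=\partial_{{\bf n}_-}f_-\vert_\cC$. First I would decompose any eigenfunction $f\in\dom T_\delta\subset H_0^1(\Omega)$ into the transverse Dirichlet basis $\{\chi_n\}_{n=1}^\infty$, $\chi_n(y)=\sqrt2\sin(n\pi y)$, writing $f(x,y)=\sum_{n\ge1}\psi_n(x)\chi_n(y)$; then \eqref{bv.general} together with $f\in H_0^1(\Omega)$ and the interface condition forces, for each fixed~$n$, the profile $\psi_n=(\psi_{n+},\psi_{n-})^\top$ to solve the one-dimensional two-sided boundary-transmission problem
\begin{equation*}
\begin{split}
 -\psi_{n+}''&=(\lambda-(n\pi)^2)\psi_{n+}\quad\text{in}\quad(0,1),\\
 \psi_{n-}''&=((1+\delta)\lambda+(n\pi)^2)\psi_{n-}\quad\text{in}\quad(-1,0),
\end{split}
\end{equation*}
subject to $\psi_{n+}(1)=\psi_{n-}(-1)=0$, $\psi_{n+}(0)=\psi_{n-}(0)$, and $(1+\delta)\psi_{n+}'(0)=-\psi_{n-}'(0)$.

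Next I would solve each ODE explicitly: the Dirichlet condition at $x=1$ gives $\psi_{n+}(x)=c_+\sin\!\big(\sqrt{\lambda-(n\pi)^2}\,(1-x)\big)$, and the Dirichlet condition at $x=-1$ gives $\psi_{n-}(x)=c_-\sinh\!\big(\sqrt{(1+\delta)\lambda+(n\pi)^2}\,(1+x)\big)$ (here the branches are chosen so that the formulas are entire in the relevant quantities, exactly as in Proposition~\ref{Prop.spec}; the choice of branch is immaterial because $\sin$ and $\sinh$ are odd and $\sin z/z$, $\sinh z/z$ are even). Imposing continuity at $x=0$ fixes $c_+,c_-$ up to a common scalar, yielding~\eqref{efs.1D.general} with $N_{n,m}^\delta$ that scalar; imposing the modified Neumann-jump condition then gives a single scalar equation for $\lambda$, which after dividing through by the product of the two $\sin$/$\sinh$ factors and using $\cos=\text{(derivative)}$ is exactly the dispersion relation \eqref{algebraic.general}. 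Conversely any root of~\eqref{algebraic.general} together with~\eqref{efs.1D.general} produces a genuine $H_0^1(\Omega)$-eigenfunction; since there are infinitely many roots for each~$n$ (the right-hand side is an oscillatory tangent for large real $\lambda$, and an analytic-continuation/Rouché argument shows this persists for small complex~$\delta$), this establishes (i), with the labelling $\{\lambda_{n,m}^\delta\}_{m\in\mathbb Z}$ obtained by perturbing the $\delta=0$ labelling of Proposition~\ref{Prop.spec}~(ii).

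For~(ii) it remains only to verify the normalization formula. Since $\chi_n$ is already normalized in $L^2((0,1))$, one needs $\|\psi_{n,m}^\delta\|_{L^2((-1,1))}=1$, i.e. $|N_{n,m}^\delta|^{-2}=\int_0^1|\sin(\sqrt{\lambda-(n\pi)^2}\,(1-x))|^2\,dx\cdot|\sinh\sqrt{(1+\delta)\lambda+(n\pi)^2}|^2 + \int_{-1}^0|\sinh(\sqrt{(1+\delta)\lambda+(n\pi)^2}\,(1+x))|^2\,dx\cdot|\sin\sqrt{\lambda-(n\pi)^2}|^2$. Each integral is computed by writing $|\sin(\alpha t)|^2=\tfrac12(\cosh(2\IM\alpha\,t)-\cos(2\RE\alpha\,t))$ and $|\sinh(\beta t)|^2=\tfrac12(\cosh(2\RE\beta\,t)-\cos(2\IM\beta\,t))$ for complex $\alpha,\beta$ and integrating the elementary antiderivatives over a unit interval; this reproduces the displayed bracketed expressions with $\sinh(2\IM\cdot)/(4\IM\cdot)$, $\sin(2\RE\cdot)/(4\RE\cdot)$ and so on. The main obstacle — and it is mild — is bookkeeping: keeping track of the branch choices and of real/imaginary parts so that the (complex, non-selfadjoint) case reduces correctly to the real formula of Proposition~\ref{Prop.spec}~(iv) when $\delta=0$, and making the perturbation/counting argument for the existence of the full doubly-indexed family precise for all $|\delta|\le c$ with an absolute $c>0$. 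Note that, unlike in the selfadjoint case, $T_\delta$ for $\delta\notin\mathbb R$ is not selfadjoint, so completeness/orthonormality is not claimed here — only that the listed functions exhaust $\sigma_{\mathrm p}(T_\delta)$ and are normalized to~$1$.
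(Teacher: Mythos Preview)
Your separation-of-variables derivation, the form of the eigenfunctions, and the normalization computation are all in line with the paper. But you have misidentified what the absolute constant~$c$ is for, and this is the one substantive step in the paper's proof.

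When you pass from the $2\times 2$ linear system for $(c_+,c_-)$ to the dispersion relation~\eqref{algebraic.general}, you divide by quantities that vanish precisely when $\lambda=(n\pi)^2$ or $\lambda=-(n\pi)^2/(1+\delta)$. At these ``exceptional'' values the general solution of the ODE changes form (e.g.\ $\psi_{n+}(x)=c_+(1-x)$ when $\lambda=(n\pi)^2$), and a separate compatibility condition must be checked; taking the limit in~\eqref{algebraic.general} one finds, for instance,
\[
\frac{\tanh\sqrt{(2+\delta)(n\pi)^2}}{\sqrt{(2+\delta)(n\pi)^2}}=\frac{1}{1+\delta}\,.
\]
For $\delta=0$ this is never satisfied (that was shown in Proposition~\ref{Prop.spec}), but for $\delta\neq 0$ it can be, and then $\lambda=(n\pi)^2$ would be an eigenvalue \emph{not} captured by your list. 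The entire content of the paper's proof is to show, by elementary estimates on $\RE(\tanh z/z)$ together with $|\RE\sqrt{\xi}|\geq|\sqrt{\RE\xi}|$, that neither compatibility condition can hold for any $n\in\mathbb N$ once $|\delta|\leq c$ (their bounds give $c\gtrsim 0.38$). Your proposal attributes~$c$ to a Rouch\'e/perturbation counting argument, which is not the issue here; without excluding the exceptional values uniformly in~$n$, the equality $\sigma_{\mathrm p}(T_\delta)=\bigcup_{n,m}\{\lambda_{n,m}^\delta\}$ claimed in~(i) is not established.
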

\begin{proof}
The results follow by the separation of variables 
as in the proof of Proposition~\ref{Prop.spec}.
Contrary to the symmetric situation $\delta=0$, however,
\eqref{algebraic.general} can have solutions 
$\lambda = (n\pi)^2$ and $\lambda = -(n\pi)^2/(1+\delta)$.
Compatibility conditions for the existence of such solutions are
\begin{equation}\label{compatibility}
  \frac{\tanh\sqrt{(2+\delta)(n\pi)^2}}{\sqrt{(2+\delta)(n\pi)^2}}
  = \frac{1}{1+\delta} 
  \,, \qquad
  \frac{\displaystyle \tanh\sqrt{\frac{2+\delta}{1+\delta}\,(n\pi)^2}}
  {\displaystyle \sqrt{\frac{2+\delta}{1+\delta}\,(n\pi)^2}}
  = 1+\delta 
  \,,
\end{equation}
respectively (they can be obtained from~\eqref{algebraic.general}
after the limit $\lambda \to (n\pi)^2$ and $\lambda \to -(n\pi)^2/(1+\delta)$,
respectively). 
We claim that these ``exceptional'' solutions do not exist 
for all~$\delta$ small in the absolute value,
uniformly in $n \in \mathbb{N}$. 
This can be proved straightforwardly by comparing the real parts of 
the left and right sides of~\eqref{compatibility}.
More specifically, we have 
$$
  \left|\RE \left( \frac{\tanh z}{z} \right)\right|
  = \frac{1}{|z|^2} 
  \left|
  \frac{z_1 \sinh(2 z_1) + z_2 \sin(2 z_2)}
  {\cosh(2 z_1)+\cos(2 z_2)}
  \right|
  \leq \frac{1}{|z_1|} 
  \frac{\sinh(2 |z_1|) + 1}
  {\cosh(2 |z_1|)-1}
$$
for all $z = z_1 + i z_2 \in \mathbb{C}$ with $z_1=\RE z \not=0$,
where the right hand side is decreasing as a function of~$|z_1|$.
Employing the elementary inequality
$
  |\RE\sqrt{\xi}| \geq |\sqrt{\RE\xi}|
$
valid for every $\xi \in \mathbb{C}$ with $\RE\xi \geq 0$
and $|\delta| < 1$,
we estimate
\begin{align*}
  \left|\RE\sqrt{(2+\delta)(n\pi)^2}\right| 
  &\geq \left|\sqrt{(2+\RE\delta)(n\pi)^2}\right| 
  \geq \pi \,,
  \\
  \left|\RE\sqrt{\frac{2+\delta}{1+\delta}\,(n\pi)^2}\right|
  &\geq \left|\sqrt{\left(1+\frac{1+\RE\delta}{1+2\RE\delta+|\delta|^2}\right) 
  (n\pi)^2}\right|
  \geq \pi \,.
\end{align*}
Consequently, we see that a necessary condition for
an equality to hold in~\eqref{compatibility} is
$$
  0.32 \approx
  \frac{1}{\pi} \frac{\sinh(2\pi)+1}{\cosh(2\pi)-1}
  \geq \min\left\{
  \RE\left(\frac{1}{1+\delta}\right), \RE(1+\delta)
  \right\}
  \geq \frac{1-|\delta|}{(1+|\delta|)^2}
  \,,
$$
which is clearly impossible if~$|\delta|$ is small enough
(the present estimates yield $c \geq 0.38$).
\end{proof}

Now we are in a position to establish the convergence of
eigenvalues and eigenfunctions of~$T_\delta$ 
to eigenvalues and eigenfunctions of~$A$ as $\delta \to 0$. 
In the next theorem we show,  in particular, that the operators ~$A_\varepsilon$ and~$B_\eta$ in the introduction  
represent an ``approximation'' of the selfadjoint operator~$A$, 
at least on the spectral level.
However, the resolvents of $A_\varepsilon$ and $B_\eta$ 
are compact for all $\kappa\not=1$ and $\eta>0$,
while the resolvent of $A$ is not compact (zero is an eigenvalue of infinite multiplicity).

\begin{theorem}
For $n \in \mathbb{N}$ and $m \in \mathbb{Z}$,
let~$\lambda_{n,m}$ and~$\psi_{n,m}$ be respectively the eigenvalues
and eigenfunctions of~$A$ specified in Proposition~\ref{Prop.spec}
and let~$\lambda_{n,m}^\delta$ and~$\psi_{n,m}^\delta$ 
be respectively the eigenvalues
and eigenfunctions of~$T_\delta$ specified
in Proposition~\ref{Prop.spec.general}.
For any $n \in \mathbb{N}$, 
the sequence $\{\lambda_{n,m}^\delta\}_{m \in \mathbb{Z}}$
can be arranged in such a way that 
$$
  \lim_{\delta \to 0} 
  \big|\lambda_{n,m}^\delta - \lambda_{n,m}\big| = 0
  \qquad \mbox{and} \qquad
  \lim_{\delta \to 0} 
  \big\|\psi_{n,m}^\delta - \psi_{n,m}\big\|_{L^\infty(\Omega)} = 0
  \,.
$$
\end{theorem}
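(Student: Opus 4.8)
The plan is to reduce everything to a one-dimensional convergence statement for the transcendental equations~\eqref{algebraic.general} and~\eqref{algebraic}, exploiting the fact that the separation of variables in Propositions~\ref{Prop.spec} and~\ref{Prop.spec.general} decouples the problem into the transverse modes $\chi_n$ (which do not depend on~$\delta$) and the longitudinal profiles $\psi_{n,m}^\delta$. Fix $n \in \mathbb{N}$. Writing $F(\lambda;\delta)$ for the left minus right side of~\eqref{algebraic.general} (so $F(\lambda;0)=F(\lambda)$ is the function~\eqref{F} from the proof of Proposition~\ref{Prop.spec}), the roots $\lambda_{n,m}^\delta$ are the zeros of $\lambda \mapsto F(\lambda;\delta)$. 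The map $(\lambda,\delta) \mapsto F(\lambda;\delta)$ is jointly holomorphic away from the branch/pole loci $\lambda = (n\pi)^2$ and $\lambda = -(n\pi)^2/(1+\delta)$; these exceptional values are excluded as genuine eigenvalues for $|\delta| \leq c$ by Proposition~\ref{Prop.spec.general}.

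First I would fix $m$ and localize: choose a small closed disc $D_{n,m} \subset \mathbb{C}$ centred at $\lambda_{n,m}$ containing no other root of $F(\cdot;0)$ and avoiding the exceptional points (possible since, by Proposition~\ref{Prop.spec}(ii), the roots $\lambda_{n,m}$ are isolated — indeed for $m\neq 0$ they lie in $(-\infty,-(n\pi)^2)\cup((n\pi)^2,\infty)$, and $\lambda_{n,0}=0$ is isolated by (iii)). On $\partial D_{n,m}$ the function $F(\cdot;0)$ is nonvanishing, hence bounded below in modulus by some $\mu>0$; by joint continuity, for $|\delta|$ small enough $|F(\cdot;\delta) - F(\cdot;0)| < \mu$ on $\partial D_{n,m}$, so Rouché's theorem gives that $F(\cdot;\delta)$ has exactly as many zeros (with multiplicity) in $D_{n,m}$ as $F(\cdot;0)$. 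Since $\lambda_{n,m}$ is a \emph{simple} root of $F(\cdot;0)$ (this is precisely the content of the last part of the proof of Proposition~\ref{Prop.spec}), $F(\cdot;\delta)$ has exactly one zero $\lambda_{n,m}^\delta$ in $D_{n,m}$, and shrinking the discs yields $\lambda_{n,m}^\delta \to \lambda_{n,m}$ as $\delta\to 0$. (Alternatively one may invoke the holomorphic implicit function theorem directly, using $F_\lambda(\lambda_{n,m};0)\neq 0$.) This fixes the arrangement of the sequence $\{\lambda_{n,m}^\delta\}_{m\in\mathbb{Z}}$ claimed in the statement.

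For the eigenfunctions, note that $\psi_{n,m}^\delta$ in~\eqref{efs.1D.general} is an explicit elementary expression in $\lambda_{n,m}^\delta$, $\delta$ and $x$, built from $\sinh$, $\sin$ and square roots. Away from the excluded values the arguments $\sqrt{(1+\delta)\lambda_{n,m}^\delta+(n\pi)^2}$ and $\sqrt{\lambda_{n,m}^\delta-(n\pi)^2}$ depend continuously on $(\lambda_{n,m}^\delta,\delta)$, hence converge to $\sqrt{\lambda_{n,m}+(n\pi)^2}$ and $\sqrt{\lambda_{n,m}-(n\pi)^2}$; the normalization constant $N_{n,m}^\delta$ likewise converges to $N_{n,m}$, provided the bracketed quantity in its defining formula has a nonzero limit — one checks this is exactly $|N_{n,m}|^{-2}\neq 0$ (when $\lambda_{n,m}\neq(n\pi)^2$, which holds), taking care of the removable singularity at $\IM\sqrt{\lambda-(n\pi)^2}=0$ via the limit $\sinh(2t)/(4t)\to\tfrac12$. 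Since the profiles $\psi_{n,m}^\delta$ and their limit $\psi_{n,m}$ are smooth on each of the closed intervals $[-1,0]$ and $[0,1]$ and the convergence of all the constituent parameters is uniform there, $\psi_{n,m}^\delta \to \psi_{n,m}$ uniformly on $\Omega$, and since $\chi_n$ is fixed and bounded, $\|\psi_{n,m}^\delta - \psi_{n,m}\|_{L^\infty(\Omega)}\to 0$.

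The main obstacle is the first step — ensuring a \emph{clean} one-to-one correspondence of the roots for $m\neq 0$. The roots $\lambda_{n,m}$ for large $|m|$ are governed by the oscillatory $\tan$ term, and one must verify that the perturbation in~\eqref{algebraic.general} does not merge, split, or reshuffle nearby roots: this is where the simplicity of the roots of~\eqref{algebraic} (Proposition~\ref{Prop.spec}(ii)) is essential, since it guarantees $F_\lambda(\lambda_{n,m};0)\neq 0$ and hence a nonvanishing Jacobian for the implicit function theorem; the uniform-in-$n$ exclusion of the exceptional points $\lambda=(n\pi)^2$, $\lambda=-(n\pi)^2/(1+\delta)$ from Proposition~\ref{Prop.spec.general} then keeps $F(\cdot;\delta)$ holomorphic on the relevant discs. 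A secondary (purely bookkeeping) subtlety is the removable-singularity handling in the normalization formula when $\lambda_{n,m}^\delta$ is real or when $\lambda_{n,m}-(n\pi)^2$ is small; this is routine once one uses the entire-function representations $\sinh z/z$, $\sin z/z$.
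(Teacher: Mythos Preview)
Your proposal is correct and follows essentially the same approach as the paper: both reduce to the one-dimensional root problem for the function $H(\lambda,\delta)=F(\lambda;\delta)$, use the simplicity $F'(\lambda_{n,m})\neq 0$ established in Proposition~\ref{Prop.spec} to guarantee a unique nearby root (the paper invokes the implicit function theorem directly, you use Rouch\'e's theorem and note the implicit function theorem as an alternative), and then read off the eigenfunction convergence from the explicit formulas~\eqref{efs.1D} and~\eqref{efs.1D.general}. Your treatment is in fact more detailed than the paper's, particularly regarding the normalization constants and the removable singularities.
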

\begin{proof}
The convergence of eigenvalues follows by the implicit function theorem 
applied to 
$$
  H(\lambda,\delta) = 
  (1+\delta) \,
  \frac{\tanh\sqrt{(1+\delta)\lambda+(n\pi)^2}}
  {\sqrt{(1+\delta)\lambda+(n\pi)^2}}
  - \frac{\tan\sqrt{\lambda-(n\pi)^2}}{\sqrt{\lambda-(n\pi)^2}}
  \,.
$$
Clearly, $H(\lambda,0)=F(\lambda)$, where~$F$ is introduced in~\eqref{F}
based on~\eqref{algebraic}. Hence, $H(\lambda_{n,m},0)=0$. 
We only need to check that the derivative $\partial_1 H(\lambda_{n,m},0)$
does not vanish.
However,  $\partial_1 H(\lambda_{n,m},0) = F'(\lambda_{n,m}) \not= 0$,
due to the proof of simplicity of the roots of~\eqref{algebraic} 
established in the proof of Proposition~\ref{Prop.spec}.
The convergence of eigenfunctions is then clear from
the expressions~\eqref{efs.1D} and~\eqref{efs.1D.general}.  
\end{proof}

\end{document}